\DeclareMathOperator{\Ker}{Ker}
\DeclareMathOperator{\Ima}{Im}
\DeclareMathOperator{\lcm}{lcm}
\DeclareMathOperator{\Proj}{Proj}
\DeclareMathOperator{\Spec}{Spec}
\newtheorem{theorem}{Theorem}[section]
\newtheorem{proposition}[theorem]{Proposition}
\newtheorem{lemma}[theorem]{Lemma}
\newtheorem{conjecture}[theorem]{Conjecture}
\theoremstyle{definition}
\newtheorem{remark}[theorem]{Remark}
\def\cqfd{
{\hfill
\kern 6pt\penalty 500
\raise -1pt\hbox{\vrule\vbox to 5pt{\hrule width 4pt
\vfill\hrule}\vrule}}
\break}
\font\tengoth=eufm10
\font\sevengoth=eufm7
\font\fivegoth=eufm5
\title[Hypersurfaces in weighted projective spaces]{Maximum number of rational points on hypersurfaces in weighted projective spaces over finite fields}
\author[Yves Aubry]{Yves Aubry}
\address[Aubry]{Institut de Math\'ematiques de Toulon - IMATH, Universit\'e de Toulon, France}
\address[Aubry]{Institut de Math\'ematiques de Marseille - I2M, Aix Marseille Univ, UMR 7373 CNRS, France}
\email{yves.aubry@univ-tln.fr}
\author[Marc Perret]{Marc Perret}
\address[Perret]{Institut de Math\'ematiques de Toulouse - UMR 5219, CNRS, UT2J, F-31058 Toulouse, France}
\email{perret@math.univ-toulouse.fr}
\date{\today}
\begin{document} 

\baselineskip=17pt

%%%%%%%%%%%%%%%%

\begin{abstract}
An upper bound for the maximum number of rational points on a hypersurface in a projective space over a finite field has been conjectured by Tsfasman and proved by Serre in 1989.
The analogue question for hypersurfaces on weighted projective spaces has been considered by  Castryck,  Ghorpade,  Lachaud,  O'Sullivan,  Ram and the first author in 2017. A conjecture has been proposed there 
under the assumption that the first weight is equal to one
and proved in the particular case of the dimension 2.
We  prove here the conjecture in any dimension provided the second weight is also equal to one.
\end{abstract}

\subjclass[2010]{Primary 14G05; Secondary 14G15}

\keywords{Rational points, finite fields, weighted projective spaces}

\maketitle

\begin{center}
\sl Dedicated to  our friend Sudhir Ghorpade for his 60$^{th}$ birthday\footnote{This work is partially supported by the French Agence Nationale de la Recherche through
the BARRACUDA project under Contract ANR-21-CE39-0009.}.
\end{center}

\section{Introduction}

Let 
${\mathbb F}_q$ be the finite field with $q$ elements and ${\mathbb P}^n({\mathbb F}_q)$ be the set of rational points over ${\mathbb F}_q$ of the projective space of dimension $n\geq 1$.
Let us set $p_n:=q^n+\cdots +q+1$ for $n\geq 0$ and $p_n:=0$ for $n<0$.
We have clearly $\sharp {\mathbb P}^n({\mathbb F}_q)=p_n$.

Answering  a conjecture that Tsfasman made at the ``Journ\'ees Arith\-m\'etiques de Luminy'' in 1989, Serre proved in \cite{Serre} (and independently  S\o rensen proved later  in \cite{Sorensen}) that if $F$ is a nonzero homogeneous polynomial in ${\mathbb F}_q[X_0,\ldots,X_n]$
of degree $d\geq 1$, then the number of rational points over ${\mathbb F}_q$ of the hypersurface $V(F)$ in
${\mathbb P}^n$ defined by $F$ satisfies the so-called Serre bound:
$$\sharp V(F)({\mathbb F}_q)\leq dq^{n-1}+p_{n-2}.$$
If   $d\geq q+1$ then $dq^{n-1}+p_{n-2}\geq p_n=\sharp {\mathbb P}^n({\mathbb F}_q)$  and  
the hypersurface defined by 
the degree $d$ homogeneous polynomial $X_0^{d-q-1}(X_0^qX_1-X_0X_1^q)$  has $p_n$ rational points.
Thus the Serre bound holds
trivially and is reached for hypersurfaces of degree greater than or equal to $q+1$.

Furthermore,  the Serre bound is reached for hypersurfaces of degree less than or equal to $q$. Indeed, 
if $d\leq q$ then the number of rational points on the hypersurface given by the polynomial 
  $$F=\prod_{i=1}^d(\alpha_iX_0-\beta_iX_1),$$
where $(\alpha_1:\beta_1), \ldots, (\alpha_d,\beta_d)$ are distincts elements of ${\mathbb P}^1({\mathbb F}_q)$, attains the Serre bound.
Note that Serre proved that the  bound is
reached for $d\leq q$ if and only if $F$ is of the above form, that is $V(F)$ is the union of $d$ hyperplanes containing a linear variety of codimension 2.

%%%%%%%%%%%%%%%%%%

%%%%%%%%%%%%%%%%%%

%%%%%%%%%%%%%%%%%%

In 1997, Tsfasman and Boguslavsky in \cite{Bogu} have considered the analogue question for a system of $r$ polynomial equations.
They propose a conjecture for the maximum number of points in ${\mathbb P}^n({\mathbb F}_q)$ of  the projective set given by the common zeros of $r$ linearly independent homogeneous polynomials of degree $d$ in ${\mathbb F}_q[X_0,\ldots,X_n]$.
The Tsfasman-Boguslavsky conjecture for $r=1$ is nothing else but the Serre bound. Boguslavsky succeded to prove  in \cite{Bogu} the case  $r=2$.
In 2015, Datta and Ghorpade proved in \cite{D-G-2015} that the Tsfasman-Boguslavsky  conjecture is true if $d=2$ and $r\leq n+1$ but is false in general if $d=2$ and $r\geq n+2$. Moreover, in 2017 they proved in \cite{D-G-2017} that the Tsfasman-Boguslavsky  conjecture is true for any positive integer $d$, provided $r\leq n+1$.
The case for $r$ beyond $n+1$ is specifically considered one year later by Beelen, Datta and Ghorpade in \cite{B-D-G-2018} and they conjectured in 2022 in \cite{B-D-G-2022} a general formula when $d<q$ that they were able to prove in some cases

%%%%%%%%%%%%%%%%%%

%%%%%%%%%%%%%%%%%%

%%%%%%%%%%%%%%%%%%

\medskip

We are interested here in a generalization in another direction, namely the question of Tsfasman and Serre in the context of weighted projective spaces ${\mathbb P}(a_0,\ldots,a_n)$, i.e.
 the study, for any homogeneous polynomial $F$ in ${\mathbb F}_q[X_0,\ldots,X_n]$ of degree $d$ (with respect to the weights $a_0,\ldots,a_n$), of the maximum number of  rational points on the hypersurface $V(F)$  in ${\mathbb P}(a_0,\ldots,a_n)$. 
In \cite{UCLA}, the following quantity has been introduced:
$$e_q(d ; a_0,a_1,a_2,\ldots,a_n):=\max_{F}\sharp V(F)({\mathbb F}_q)$$
where the maximum ranges over the set of homogeneous polynomials $F$ in ${\mathbb F}_q[X_0,\ldots,X_n]$ of weighted degree $d$.

It has been conjectured in 2017  in \cite{UCLA} that:

\begin{conjecture}\label{conjecture}
If $a_0=1$ and $\lcm(a_1,a_2,\ldots,a_n)\vert d$, and if we order the weights such that $a_1\leq a_2\leq \ldots \leq a_n$ then
$$e_q(d;1,a_1,a_2,\ldots,a_n)=\min\{p_n,\frac{d}{a_1}q^{n-1}+p_{n-2}\}.$$
\end{conjecture}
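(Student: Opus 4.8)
The plan is to run a weighted analogue of Serre's argument in which the divisor $a_1$ enters through the degree of a pencil of fibres. Since $a_0=1$, the point $P_\infty=[0:1:0:\cdots:0]$ is rational, and projection away from it sends $[x_0:x_1:\cdots:x_n]$ to $[x_0:x_2:\cdots:x_n]$ in $\mathbb{P}(1,a_2,\ldots,a_n)$, with fibres the weighted lines $\mathbb{P}(1,a_1)\cong\mathbb{P}^1$. The decisive observation is that a form of weighted degree $d$ restricts on such a fibre to a form of degree $d/a_1$, because its relevant monomials are the $X_0^{\,d-a_1 j}X_1^{\,j}$ with $0\le j\le d/a_1$; hence a fibre not contained in $V(F)$ meets it in at most $d/a_1$ points. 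This is the structural origin of the coefficient $\frac{d}{a_1}$, and it already yields the crude bound $\sharp V(F)(\mathbb{F}_q)\le\frac{d}{a_1}\,p_{n-1}$ whenever $P_\infty\notin V(F)$. I take the ambient point counts, in particular $\sharp\mathbb{P}(1,a_2,\ldots,a_n)(\mathbb{F}_q)=p_{n-1}$ and the affine-chart identity used below, from \cite{UCLA}.

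For the lower bound I would exhibit extremal hypersurfaces. When $d/a_1\le q$, the hypothesis $\lcm(a_1,\ldots,a_n)\mid d$ gives $a_1\mid d$, so
\begin{equation*}
F=\prod_{i=1}^{d/a_1}\bigl(X_1-c_iX_0^{a_1}\bigr),\qquad c_1,\ldots,c_{d/a_1}\in\mathbb{F}_q\ \text{pairwise distinct},
\end{equation*}
is weighted homogeneous of degree $d$, each factor having degree $a_1$. In the chart $X_0=1$ its zero locus consists of the $d/a_1$ parallel slices $X_1=c_i$, giving $\frac{d}{a_1}q^{n-1}$ points, while on $X_0=0$ it becomes $X_1^{d/a_1}$, cutting out the copy of $\mathbb{P}(a_2,\ldots,a_n)$ with $p_{n-2}$ points; hence $\sharp V(F)(\mathbb{F}_q)=\frac{d}{a_1}q^{n-1}+p_{n-2}$. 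An elementary comparison shows this is exactly the asserted minimum when $d/a_1\le q$, while for $d/a_1\ge q+1$ one saturates at $p_n$ with a Serre-type polynomial, so the minimum is attained on both branches.

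The upper bound is the substance, and here $a_0=1$ gives the exact decomposition
\begin{equation*}
\sharp V(F)(\mathbb{F}_q)=\sharp\{x\in\mathbb{A}^n(\mathbb{F}_q):f(x)=0\}+\sharp V(F_\infty)(\mathbb{F}_q),
\end{equation*}
with $f=F(1,X_1,\ldots,X_n)$ and $F_\infty=F(0,X_1,\ldots,X_n)$. When $P_\infty\notin V(F)$ every affine fibre of the pencil above meets $V(f)$ in at most $d/a_1$ points and none is contained in it, so the affine term is bounded cleanly by $\frac{d}{a_1}q^{n-1}$; the case $P_\infty\in V(F)$, where the coefficient of $X_1^{d/a_1}$ vanishes and $\deg_{X_1}f$ drops, needs separate treatment, bounding the affine lines that may then lie in $V(f)$. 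The term $\sharp V(F_\infty)(\mathbb{F}_q)$ is a hypersurface of degree $d$ inside $\mathbb{P}(a_1,\ldots,a_n)$ and must be controlled by induction on $n$. The difficulty is that simply adding the inductive bound $\frac{d}{a_1}q^{n-2}+p_{n-3}$ for the section at infinity to the affine estimate overshoots the target $\frac{d}{a_1}q^{n-1}+p_{n-2}$ by $\left(\frac{d}{a_1}-1\right)q^{n-2}$; recovering this surplus is exactly the role of Serre's refinement, which couples the fibrewise intersection data with the extent to which $V(F)$ contains weight-one hyperplanes, and which I expect to require a simultaneous recursion in both $d$ and $n$.

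The main obstacle, and precisely the dividing line between the general conjecture and what is provable, is the section at infinity $V(F_\infty)$ inside $\mathbb{P}(a_1,\ldots,a_n)$: this space has leading weight $a_1$, so for $a_1\ge 2$ it carries no weight-one coordinate, the clean affine chart disappears, and the inductive form of the statement — which I invoke in the shape $\min\{p_{n-1},\frac{d}{a_1}q^{n-2}+p_{n-3}\}$ for a space of leading weight $1$ — simply does not apply to it. To reach arbitrary $a_1$ I would therefore strengthen the induction into a single statement valid for \emph{every} ordered weight vector $(a_1,\ldots,a_n)$, irrespective of whether its first entry is $1$, and then make the Serre refinement uniform across that stronger induction while keeping the weighted affine bound sharp at each level. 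I expect this simultaneous control of the weight-$a_1$ strata to be the decisive difficulty: it collapses to a triviality exactly when the second weight equals $1$, for then the space at infinity again has a weight-one coordinate and the induction closes.
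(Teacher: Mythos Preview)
The statement is a conjecture that the paper does \emph{not} prove in general; the paper establishes only the special case $a_1=1$ (its final theorem). Your proposal is likewise not a proof of the full conjecture: it is an outline that correctly locates where a direct Serre-type fibration argument breaks, namely at the section at infinity inside $\mathbb{P}(a_1,\ldots,a_n)$ when $a_1\ge 2$. So on the level of what is actually established you and the paper agree --- both stop at $a_1=1$ --- but even for that case your sketch is incomplete: you name ``Serre's refinement'' as the device that should recover the $(\frac{d}{a_1}-1)q^{n-2}$ overshoot without carrying it out, and the separate treatment of the case $P_\infty\in V(F)$ is only announced.

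The route you sketch is genuinely different from the paper's. You imitate Serre's original argument: project away from $[0:1:0:\cdots:0]$, bound the intersection with each fibre of the resulting pencil (this is where the factor $d/a_1$ enters), and descend by induction on $n$; this is in the spirit of the $n=2$ argument of \cite{UCLA}. The paper proceeds instead by \emph{unscrewing} the weights. It introduces the power maps $\pi_i:\mathbb{P}(a_0,\ldots,1,\ldots,a_n)\to\mathbb{P}(a_0,\ldots,a_i,\ldots,a_n)$, $[x_0:\cdots:x_n]\mapsto[x_0:\cdots:x_i^{a_i}:\cdots:x_n]$, proves the averaging inequality $N(F)\le\frac{1}{r_i}\sum_{j=0}^{r_i-1} N(\pi_i^\ast(F\circ\sigma_i^j))$ with $r_i=\gcd(a_i,q-1)$, iterates over all $i$ to reach the straight space $\mathbb{P}^n$, and then applies the classical Serre bound there once. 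This yields $N(F)\le dq^{n-1}+p_{n-2}$ for \emph{every} weight vector, with no induction on $n$ and no refinement step --- but it matches the conjectured $\frac{d}{a_1}q^{n-1}+p_{n-2}$ only when $a_1=1$. Your approach has the factor $d/a_1$ built in from the fibre degree and so aims beyond $a_1=1$, at the cost of the inductive gap you could not close; the paper's approach sidesteps that gap entirely by settling for a bound that is sharp precisely when $a_1=1$.
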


In the case of the projective line ${\mathbb P}(a_0,a_1)$, it has been shown  in \cite{UCLA} that $e_q(d; a_0,a_1)=\min\{p_1, d/a\}$ where $a=\lcm(a_0,a_1)$, so the conjecture holds in this case.
Moreover, the conjecture has been proved  in \cite{UCLA} 
for projective planes ${\mathbb P}(1,a_1,a_2)$ with $a_1$ and $a_2$ coprime and $a_1<a_2$: if $F\in{\mathbb F}_q[X_0,X_1,X_2]$ is a nonzero weighted homogenous polynomial of degree $d\leq a_1(q+1)$ which is a multiple of $a_1a_2$ then $\sharp V(F)({\mathbb F}_q) \leq \frac{d}{a_1}q+1$.
The proof follows the one given by Serre  with a new notion of lines represented by either a homogenized linear bivariate equation, or  the line at infinity.

Our purpose here is to prove Conjecture \ref{conjecture} in any dimension $n$
provided $a_1=1$.

We recall in Section \ref{section-lower-bound}  the basic facts about weighted projective spaces and a lower bound for $e_q(d;a_0,\ldots,a_n)$.
Then we study in Section \ref{section-pullback} some morphisms between weighted projective spaces and we establish a relation between  
the numbers of zeros of a polynomial and its pullback.
Section \ref{section-upper-bound} is devoted to the proof of an upper bound for the number of rational points on an hypersurface in a weighted projective space.
Finally we state and prove the main result in Section \ref{section-main-result}.

%%%%%%%%%%%%%%%%%%

%%%%%%%%%%%%%%%%%%

%%%%%%%%%%%%%%%%%%

\section{A lower bound for the number of rational points}\label{section-lower-bound}

%%%%%%%%%%%%%%%%%%

%%%%%%%%%%%%%%%%%%

\subsection{Weighted projective spaces}\label{section-wps}

Let $a_0,\ldots, a_n$ be positive integers  and $S$ be the polynomial ring ${\mathbb F}_q[X_0,\ldots,X_n]$ graded by $\deg(X_i)=a_i$. 
The weighted projective space
${\mathbb P}(a_0,\ldots,a_n)$ over ${\mathbb F}_q$ 
is the scheme
$${\mathbb P}(a_0,\ldots,a_n)=\Proj S,$$
and can be seen as the geometric quotient 
$${\mathbb A}_{{\mathbb F}_q}^{n+1}\setminus \{0\}/{\mathbb G}_{m, {\mathbb F}_q}$$
of the punctured affine space ${\mathbb A}_{{\mathbb F}_q}^{n+1}\setminus \{0\}$ over ${\mathbb F}_q$ under the action of the multiplicative group ${\mathbb G}_{m, {\mathbb F}_q}$ over ${\mathbb F}_q$ given for  any nonzero $\lambda$ in an algebraic closure $\overline{\mathbb{F}}_q$ of ${\mathbb F}_q$ by
$$\lambda . (x_0,\ldots,x_n)=(\lambda^{a_0}x_0,\ldots,\lambda^{a_n}x_n).$$
If the $a_i$'s are all equal to 1, then we recover the usual (or straight) projective space:
${\mathbb P}(1,\ldots,1)={\mathbb P}^n.$

The corresponding equivalent class is denoted by $[x_0:\cdots:x_n]$ without any reference to the corresponding weights $a_0, \ldots, a_n$ and is called a weighted projective point.
We say that the point is ${\mathbb F}_q$-rational if $[x_0:\cdots:x_n]=[x_0^q:\cdots:x_n^q]$.
Every ${\mathbb F}_q$-rational point of a weighted projective space over ${\mathbb F}_q$ has at least one representative  in ${\mathbb F}_q^{n+1}\setminus \{(0,\ldots,0)\}$. 
This result  has been quoted in \cite{Marc} but without a complete proof. 
Due to a lack of proof writing, we provide the following one over any field $k$ which has been communicated to the authors by Laurent Moret-Bailly.
\begin{proposition}\label{LMB-projPond} Let $k$ be a field and $\underline{a}=(a_0, a_1, \cdots, a_n)$ be a sequence of $n+1$ nonzero integers. Then each $k$-rational point $x\in {\mathbb P}(a_0,\ldots,a_n)$ has a representative $x=[x_0:x_1: \cdots: x_n]$ with $x_i\in k$ for any $0\leq i\leq n$.
\end{proposition}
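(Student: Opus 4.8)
The plan is to reduce the statement to a Galois-descent argument for the $\mathbb{G}_m$-torsor underlying the weighted projective space. Let $\overline{k}$ be a separable closure of $k$ with Galois group $\Gamma = \Gal(\overline{k}/k)$, and let $x \in \mathbb{P}(a_0,\dots,a_n)(k)$. By definition of the quotient, $x$ is represented by a point $\tilde{x} = (x_0,\dots,x_n) \in \overline{k}^{n+1}\setminus\{0\}$, well-defined up to the action $\lambda\cdot(x_0,\dots,x_n) = (\lambda^{a_0}x_0,\dots,\lambda^{a_n}x_n)$ of $\lambda \in \overline{k}^\times$. First I would record that $x$ being $k$-rational means precisely that for every $\sigma \in \Gamma$ there exists $\lambda_\sigma \in \overline{k}^\times$ with $\sigma(\tilde{x}) = \lambda_\sigma \cdot \tilde{x}$, i.e. $\sigma(x_i) = \lambda_\sigma^{a_i} x_i$ for all $i$. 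One checks the cocycle relation $\lambda_{\sigma\tau}^{a_i} = \lambda_\sigma^{a_i}\,\sigma(\lambda_\tau)^{a_i}$ on each coordinate that is nonzero; since not all coordinates vanish, this forces $\lambda_{\sigma\tau} = \lambda_\sigma\,\sigma(\lambda_\tau)$ up to a root of unity, and after a harmless adjustment (replacing $\lambda_\sigma$ by $\lambda_\sigma/\lambda_1$, or passing to the actual stabilizer data) we obtain a genuine $1$-cocycle $\sigma \mapsto \lambda_\sigma$ valued in $\overline{k}^\times = \mathbb{G}_m(\overline{k})$.

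Next I would invoke Hilbert's Theorem 90 in the form $H^1(\Gamma, \overline{k}^\times) = 1$: the cocycle $(\lambda_\sigma)$ is a coboundary, so there is $\mu \in \overline{k}^\times$ with $\lambda_\sigma = \mu^{-1}\,\sigma(\mu)$ for all $\sigma$. Then I would replace the representative $\tilde{x}$ by $\tilde{x}' = \mu\cdot\tilde{x} = (\mu^{a_0}x_0,\dots,\mu^{a_n}x_n)$ and compute, for each $i$,
$$\sigma(\mu^{a_i}x_i) = \sigma(\mu)^{a_i}\sigma(x_i) = \sigma(\mu)^{a_i}\lambda_\sigma^{a_i}x_i = \bigl(\sigma(\mu)\lambda_\sigma\bigr)^{a_i}x_i = \mu^{a_i}x_i,$$
using $\sigma(\mu)\lambda_\sigma = \mu$. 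Hence $\tilde{x}'$ is $\Gamma$-invariant coordinatewise, so $\tilde{x}' \in k^{n+1}$, and since $\mu \neq 0$ it is still a nonzero representative of $x$. This yields the desired representative $x = [\mu^{a_0}x_0 : \cdots : \mu^{a_n}x_n]$ with all entries in $k$.

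The one genuinely delicate point — the main obstacle — is the passage from the \emph{a priori} relation $\sigma(\tilde{x}) = \lambda_\sigma\cdot\tilde{x}$ to an honest $\mathbb{G}_m$-valued cocycle. The scalar $\lambda_\sigma$ is only well-defined up to an element of the (finite, cyclic) stabilizer of $\tilde{x}$ under the $\mathbb{G}_m$-action, namely the group $\mu_g(\overline{k})$ of $g$-th roots of unity where $g = \gcd\{a_i : x_i \neq 0\}$; so strictly speaking one first gets a cocycle in $\mathbb{G}_m/\mu_g \cong \mathbb{G}_m$ (the isomorphism being the $g$-th power map). I would handle this by working from the start with the cocycle $\sigma \mapsto \bar{\lambda}_\sigma \in (\mathbb{G}_m/\mu_g)(\overline{k})$, applying Theorem 90 to $\mathbb{G}_m/\mu_g$ — equivalently noting the $g$-th power map $\overline{k}^\times \to \overline{k}^\times$ is surjective with kernel $\mu_g$ and using the vanishing of $H^1(\Gamma,\mu_g)$ only insofar as needed, or simply lifting $\bar\lambda_\sigma$ to $\overline{k}^\times$ after solving $\mu^{-1}\sigma(\mu) = \bar\lambda_\sigma$ there (any lift works since the ambiguity lies in $\mu_g$, which acts trivially on $\tilde{x}$). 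Either way the final verification above goes through unchanged, because the computation only uses $\sigma(\mu)\lambda_\sigma \in \mu\cdot\mu_g(\overline{k})$, and $\mu_g(\overline{k})$ fixes $\tilde{x}$. I should also remark that the hypothesis that the $a_i$ are nonzero is exactly what makes the stabilizer finite and the $\mathbb{G}_m$-action have the quotient description used above.
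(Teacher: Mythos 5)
Your argument is essentially correct, but it takes a genuinely different route from the paper. You run a Galois-descent argument: the scalars $\lambda_\sigma$ relating $\sigma(\tilde x)$ to $\tilde x$ form a $1$-cocycle valued in $\overline{k}^\times/\mu_g(\overline{k})$ (the quotient by the stabilizer of $\tilde x$, $g=\gcd\{a_i : x_i\neq 0\}$), this quotient is again $\mathbb{G}_m$ via the $g$-th power map, Hilbert 90 trivializes the cocycle, and any lift $\mu$ of the trivializing element rescales $\tilde x$ to a $\Gamma$-invariant representative because the ambiguity lies in the stabilizer. The paper instead avoids cohomology entirely: it stratifies $\mathbb{P}(a_0,\ldots,a_n)$ by the support $I$ of the point, reduces to the dense torus of $\mathbb{P}(b_i, i\in I)$ with coprime weights $b_i=a_i/d_I$, and uses a B\'ezout relation $\sum u_ib_i=1$ to exhibit the explicit normalization $\lambda=\prod x_i^{-u_i}$ placing the orbit on the affine variety $\prod x_i^{u_i}=1$; in effect the paper writes down by hand the splitting whose existence you deduce from $H^1(\Gamma,\overline{k}^\times)=1$. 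Your approach is shorter and more conceptual (it identifies the fiber of the cone as a $\mathbb{G}_m$-torsor and quotes the vanishing of its $H^1$); the paper's is more elementary, is uniform in that it produces a canonical representative rather than merely an existence statement, and sidesteps any discussion of which closure one works over.

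Two small repairs are needed. First, a sign slip: with the convention $\lambda_{\sigma\tau}=\lambda_\sigma\,\sigma(\lambda_\tau)$, the coboundary is $\lambda_\sigma=\mu^{-1}\sigma(\mu)$, and then the invariant representative is $\mu^{-1}\cdot\tilde x=(\mu^{-a_0}x_0,\ldots,\mu^{-a_n}x_n)$, not $\mu\cdot\tilde x$ (your displayed computation uses $\sigma(\mu)\lambda_\sigma=\mu$, which is the other convention); this is cosmetic. Second, your appeal to ``the vanishing of $H^1(\Gamma,\mu_g)$'' is a red herring — that group is $k^\times/(k^\times)^g$ and generally does not vanish — but your parenthetical remark already contains the correct fix: one only needs $H^1(\Gamma,\overline{k}^\times/\mu_g(\overline{k}))=1$, which follows from the $\Gamma$-isomorphism with $\overline{k}^\times$, and any lift of $\mu$ works since the discrepancy acts trivially on $\tilde x$. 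Finally, for imperfect $k$ you should justify that a representative $\tilde x$ can be chosen with coordinates in the separable closure (this follows from smoothness of the reduced fiber of the punctured cone, cf.\ the paper's Proposition~\ref{Prop-LMB_Scheme}); over the finite fields that the paper actually uses, $k$ is perfect and this is automatic.
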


\begin{proof}

[Communicated by Laurent Moret-Bailly] Given a geometric point $x=[x_i ; 0\leq i\leq n]\in {\mathbb P}(a_0,\ldots,a_n)$, we denote by $\vert x\vert :=\{i\in \{0, \cdots, n\}, x_i\neq 0\}$ the support of $x$. Then the whole projective space is partitioned into
$${\mathbb P}(a_0,\ldots,a_n)=\bigcup_{\emptyset \neq I \subset  \{0, \cdots, n\}} W_{\underline{a}}^I,$$
where $W_{\underline{a}}^I := \{x \in {\mathbb P}(a_0,\ldots,a_n), \vert x\vert =I\}$, so that we have to prove that for any nonempty subset $I$ of $\{0, \cdots, n\}$, any $k$-rational point in $W_{\underline{a}}^I$ admits a $k$-rational representative. For this purpose, consider the puncturing
 regular map defined over $k$
$$ 
\begin{matrix} W_{\underline{a}}^I &\longrightarrow &{\mathbb P}(a_i, i\in I)\\
[x_i ; 0\leq i\leq n] & \mapsto & [x_i ; i\in I]
\end{matrix}
$$
into a weighted projective space of dimension $\sharp I-1$. This map is injective
and, in case $\sharp I \geq 2$, is an isomorphism onto the dense torus 
$$T_{(a_i, i\in I)} := \{[x_i; i\in I] \in {\mathbb P}(a_i, i\in I) ; \forall i \in I, x_i \neq 0\}$$
 of ${\mathbb P}(a_i, i\in I)$. 
 Now if $d_I$ denotes  the gcd of the $a_i, i\in I$ and
 $b_i:= \frac{a_i}{d_I}$ for all $i\in I$,
  then we have first that the $b_i, i\in I$ are coprime, second that ${\mathbb P}(a_i, i\in I)$ is $k$-isomorphic to ${\mathbb P}(b_i, i\in I)$ (see the lemma in section 1.1. of \cite{Dolgachev}). Hence, $W_{\underline{a}}^I$ is $k$-isomorphic to the dense torus
$T_{(b_i, i\in I)}$
  of ${\mathbb P}(b_i, i\in I)$ and we are reduced to prove the proposition only for $x$ in the dense torus of a weighted projective space ${\mathbb P}(b_i, i\in I)$ whose weights $(b_i, i\in I)$ are coprime.

To do this, let $(u_i ; i\in I) \in {\mathbb Z}^I$ such that $\sum_{i\in I} u_ib_i=1$, and consider the subset 
$$V_I = \{(x_i ; i\in I)\in {\mathbb A}_{{\mathbb F}_q}^{I}\setminus \{0_I\} ; \quad \prod_{i\in I}  x_i^{u_i}=1\}$$
of the affine space of
dimension  $\sharp I$. It is then easily checked that for any $x=[x_i, i\in I]$ in the dense torus of ${\mathbb P}(b_i, i\in I)$, its only representative $(\lambda^{b_i}x_i, i\in I)$ lying on 
$V_{I}$, for $\lambda \in \overline{k}$, is the one for $\lambda = \prod_{i\in I}x_i^{-u_i}$. This proves that $W_{\underline{a}}^I$, the dense torus $T_{(a_i, i\in I)}$ and the affine subvariety $V^I$ are $k$-isomorphic, and we are done in case $\sharp I\geq 2$. 

In case $\sharp I=1$, the weighted projective space with only one weight $a\in {\mathbb N}^*$ is ${\mathbb P}(a) = {\mathbb A}_{k}^*/{\mathbb G}_{m,k}$ for the action $\lambda . x=\lambda^a x$, so that for any $x\in \overline{k}^*$, we have $[x]=[1]$ (take $\lambda$ be any $a$-th rooth of $x$ in $\overline{k}^*$), so that any point in ${\mathbb P}(a)$ is $k$-rational 
which concludes the proof.
\end{proof}

Furthermore, Laurent Moret-Bailly has communicated to us the following  more general scheme theoretic statement.

\begin{proposition}\label{Prop-LMB_Scheme}(Moret-Bailly)
Let $S=\oplus_{n\geq 0}S_n$ be a positively graded ring. Let $X=\Proj(S)$, $C=\Spec(S)\setminus \Spec(S_0)$ be the punctured cone, and $\rho : C \longrightarrow X$ be the natural projection.
Then, for any $x:\Spec(k) \longrightarrow X$, the reduced fiber $(C\times_{\rho, X,x} \Spec(k))_{red}$ is isomorphic to
$\Spec(k[t,t^{-1}])$.

In particular, for any field $k$, the map $C(k)\longrightarrow X(k)$ induced by $\rho$ is surjective.
\end{proposition}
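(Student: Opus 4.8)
The plan is to reduce the general graded-ring statement to the key geometric input, namely that a punctured cone over a point is a torsor-like object. First I would set up notation: write $S=\bigoplus_{n\ge 0}S_n$, let $X=\Proj(S)$ and $C=\Spec(S)\setminus V(S_+)$ where $S_+=\bigoplus_{n\ge 1}S_n$ is the irrelevant ideal — actually, following the statement, $C=\Spec(S)\setminus\Spec(S_0)$ is the complement of the closed subscheme cut out by $S_+$. The natural ${\mathbb G}_m$-action on $C$ (with weight $n$ on $S_n$) makes $\rho:C\to X$ the quotient map, and $X$ is covered by the affine opens $D_+(f)=\Spec(S_{(f)})$ for homogeneous $f\in S_+$ of positive degree, with $\rho^{-1}(D_+(f))=\Spec(S_f)_0$-bundle... more precisely $\rho^{-1}(D_+(f))=\Spec(S_f)$ where $S_f$ is the localization, and one has $(S_f)_0=S_{(f)}$ with $S_f\cong S_{(f)}[f,f^{-1}]$ whenever $f$ has degree $1$; in general $S_f$ is a ${\mathbb Z}$-graded ring whose degree-zero part is $S_{(f)}$.

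The main computation is the fiber. Given $x:\Spec(k)\to X$, it factors through some $D_+(f)$, so $x$ corresponds to a ring map $\varphi:S_{(f)}\to k$. Then
\[
C\times_{\rho,X,x}\Spec(k)=\Spec\bigl(S_f\otimes_{S_{(f)}}k\bigr).
\]
I would analyze $A:=S_f\otimes_{S_{(f)}}k$ directly. Being ${\mathbb Z}$-graded with $A_0=k$ (as $(S_f)_0=S_{(f)}$ and we base change along $\varphi$), and with $f$ acting invertibly, $A$ is a ${\mathbb Z}$-graded $k$-algebra in which $f$ is a unit of positive degree $e=\deg f$. The key point: for each $m$, $A_m$ is a $k=A_0$-module, and multiplication by the unit $f$ gives isomorphisms $A_m\xrightarrow{\sim}A_{m+e}$. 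One then shows $A_{red}$ — after killing nilpotents — has each graded piece of dimension $\le 1$: indeed if $u,v\in A_m$ then $uv\cdot f^{?}$ and $u^? /v^?$-type manipulations, together with the fact that $A_0=k$ is a field, force $A_m$ to be at most $1$-dimensional modulo nilpotents, and in fact $A_{red}\cong k[t,t^{-1}]$ with $t$ in some degree dividing... Here is where one must be careful and where the real argument lives: one picks a homogeneous element of $A_{red}$ of minimal positive degree $d\mid e$, checks it is invertible (using that $f$ is invertible and a gcd/Bézout argument among the degrees appearing), and deduces $A_{red}=k[t,t^{-1}]$ with $\deg t=d$; but $k[t,t^{-1}]$ for any $d$ is abstractly isomorphic to $\Spec(k[t,t^{-1}])$ as a scheme (forgetting the grading), which is all that is claimed. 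I expect the bookkeeping with the possibly-nontrivial degree $e$ of $f$, and showing the minimal-degree element is genuinely a unit in $A_{red}$ rather than a zero-divisor, to be the main obstacle; the cleanest route is probably to first replace $f$ by a power or use several $f_i$'s covering $X$ near $x$ to arrange that $1$ lies in the subgroup of ${\mathbb Z}$ generated by the degrees of invertible homogeneous elements.

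Finally, for the "in particular" clause, once $(C\times_X \Spec(k))_{red}\cong\Spec(k[t,t^{-1}])$ I note this scheme is nonempty and has a $k$-point (e.g.\ $t\mapsto 1$), hence $C\times_X\Spec(k)$ has a $k$-point, which by the universal property of fiber product is exactly a lift of $x$ to $C(k)$. Thus $C(k)\to X(k)$ is surjective. Applying this with $S=\mathbb{F}_q[X_0,\dots,X_n]$ graded by $\deg X_i=a_i$, so that $X=\mathbb{P}(a_0,\dots,a_n)$ and $C=\mathbb{A}^{n+1}\setminus\{0\}$, recovers Proposition~\ref{LMB-projPond}. One small point to address: the statement writes $C=\Spec(S)\setminus\Spec(S_0)$, and I would remark that $\Spec(S_0)\hookrightarrow\Spec(S)$ is the vanishing locus of $S_+$, so this $C$ is indeed the usual punctured cone and $\rho$ is defined on all of it.
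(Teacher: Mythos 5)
Your setup --- factoring $x$ through $D_+(f)$, identifying the fiber with $\Spec(S_f\otimes_{S_{(f)}}k)$, and deducing the ``in particular'' clause from the existence of a $k$-point of $\Spec(k[t,t^{-1}])$ --- coincides with the paper's. But the entire mathematical content of the proposition is the graded-ring statement you leave open: that a ${\mathbb Z}$-graded ring $A$ with $A_0$ a field and a homogeneous unit $f$ of degree $e>0$ satisfies $A_{\mathrm{red}}\cong A_0[t,t^{-1}]$. You explicitly defer this step (``$uv\cdot f^{?}$ and $u^?/v^?$-type manipulations'', ``where the real argument lives'', ``the main obstacle''), so as written the proposal is a plan rather than a proof; this is a genuine gap. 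For the record, the paper isolates the missing step as Lemma~\ref{Lemme-LMB}: take $d>0$ minimal such that $B_d$ contains a unit; for $g\in B_e$ with $e\notin d{\mathbb Z}$, Euclidean division $e=dq+r$ with $0<r<d$ shows $gf^{-q}\in B_r$ is a non-unit by minimality of $d$, hence $g$ is a non-unit, hence $g^df^{-e}$ is a non-unit of the field $B_0$, hence zero, hence $g^d=0$; thus all of $\bigoplus_{e\notin d{\mathbb Z}}B_e$ is nilpotent, while $B^{(d)}=\bigoplus_m B_{dm}\cong B_0[t,t^{-1}]$ (each $B_{dm}$ is $B_0$ times the $m$-th power of the unit) maps onto $B_{\mathrm{red}}$ with zero kernel.

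Two further remarks. First, your sketched route does close up, and more easily than you fear: for any nonzero homogeneous $g\in (A_{\mathrm{red}})_m$, the element $g^ef^{-m}$ lies in $(A_{\mathrm{red}})_0=k$ and is nonzero (otherwise $g^e=0$ in a reduced ring), hence is a unit, hence $g$ is a unit. So every nonzero homogeneous element of $A_{\mathrm{red}}$ is invertible; the set of degrees of such elements is then a subgroup $d{\mathbb Z}$ of ${\mathbb Z}$ containing $e$, and each nonzero graded piece is one-dimensional over $k$ (the ratio of two nonzero elements of equal degree lies in $(A_{\mathrm{red}})_0=k$), which gives $A_{\mathrm{red}}\cong k[t,t^{-1}]$ with $\deg t=d$. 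Second, your suggested ``cleanest route'' --- replacing $f$ by a power, or combining several $f_i$, to arrange that $1$ lies in the subgroup generated by the degrees of invertible homogeneous elements --- is a dead end: for a point of ${\mathbb P}(a_0,\dots,a_n)$ whose support involves only weights divisible by some $d>1$, every homogeneous element not vanishing at $x$ has degree in $d{\mathbb Z}$, so $t$ genuinely sits in degree $d$ and no choice of $f$ improves this. Only the underlying ungraded scheme is always $\Spec(k[t,t^{-1}])$, which is indeed all the statement asserts.
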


\begin{proof}
Let $x:\Spec(k) \longrightarrow X$ be a rational point of $X$ over $k$. There exists some $f\in S_d$ with $d>0$, such that the image of $x$ is contained in the affine open subset $D^{+}(f):=\Spec S_{(f)} \subset X$, the spectrum of the localization at $(f)$. Taking the fiber product from the morphisms $\rho$ and $x$, we get the diagram

\medskip

\begin{small}
\begin{tikzcd}
  \Spec(S[\frac{1}{f}]\otimes_{S_{(f)}} k )=\Spec(S[\frac{1}{f}])\times_{\Spec((S_{(f)})} \Spec(k)  \arrow[r] \arrow[d] \ar[dr, phantom, "\square"]
    & \Spec(k) \arrow[d, "x"] \\
 %  \null & \null \\
  \Spec(S[\frac{1}{f}])=\rho^{-1}(D^+(f)) \arrow[r, "\rho"] \arrow[d, "\cap"]
& |[]|   D^+(f)=\Spec(S_{(f)}) \arrow[d, "\cap"]\\
 C \arrow[r, "\rho"]&  X
 \end{tikzcd}
 \end{small}
 \medskip

\noindent
with $C\times_{X} \Spec(k)= \Spec(S[\frac{1}{f}])\times_{\Spec((S_{(f)})} \Spec~(k)$.
We conclude using the following Lemma~\ref{Lemme-LMB} for the graded algebra $B=S[\frac{1}{f}]\otimes_{S_{(f)}} k$, whose degree zero homogeneous part is a field. Indeed, the $k$-rational point 
$x:\Spec(k) \longrightarrow D^{+}(f):=\Spec S_{(f)} \subset X$
corresponds to a morphism of rings $x^{\sharp} : S_{(f)} \rightarrow k$, whose kernel ${\mathcal M}$ is a maximal ideal of $S_{(f)}$. From the isomorphism induced by $x^{\sharp} : S_{(f)}/{\mathcal M} S_{(f)} \simeq k$, we deduce the isomorphism of graded rings
$$B=S[\frac{1}{f}]\otimes_{S_{(f)}} \left(S_{(f)}/{\mathcal M} S_{(f)}\right) \simeq S[\frac{1}{f}]/{\mathcal M}S[\frac{1}{f}],$$
whose degree zero homogeneous part is $S_{(f)}/{\mathcal M} S_{(f)}$, which is isomorphic to $k$ hence is a field.
\end{proof}

\begin{lemma}(Moret-Bailly)\label{Lemme-LMB}
Let $B=\bigoplus_{n\in {\mathbb Z}} B_n$ be a ${\mathbb Z}$-graded ring. Assume that $B_d$ contains an  element $f$ invertible in $B$, for some $d>0$.

\begin{enumerate}
\item Then, the morphism of ${\mathbb Z}$-graded rings
$$\begin{matrix}
\phi_f : & &B_0[t, t^{-1}]& \longrightarrow& B^{(d)}&=&\bigoplus_{n \in d{\mathbb Z}} B_n\\
~&~&t&\mapsto &f&~&~
\end{matrix}$$
is an isomorphism.
\item If moreover $B_0$ is a field and $d$ is minimal for the properties $d>0$ and $B_d\cap B^* \neq \emptyset$, then the composite map
$$B_0[t, t^{-1}] \overset{\phi_f}\longrightarrow B^{(d)} \hookrightarrow B \rightarrow B_{\hbox{red}}$$ 
is a graded ring  isomorphism.
\end{enumerate}

\end{lemma}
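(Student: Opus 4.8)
The plan is to handle the two parts in turn: part (1) is a direct computation once one records that a homogeneous unit has a homogeneous inverse, and part (2) reduces to an argument about reduced $\mathbb{Z}$-graded rings whose degree-zero component is a field.

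For (1), I would first note that if $f\in B_d$ is invertible in the (commutative) ring $B$, then $f^{-1}\in B_{-d}$: writing $f^{-1}=\sum_m g_m$ with $g_m\in B_m$ and comparing the degree-$0$ homogeneous components of $ff^{-1}=1$ gives $fg_{-d}=1$, whence $g_{-d}=f^{-1}$. Consequently, grading $B_0[t,t^{-1}]$ by $\deg t=d$, the assignment $t\mapsto f$ (hence $t^{-1}\mapsto f^{-1}$, $t^n\mapsto f^n\in B_{nd}$) defines a homomorphism $\phi_f$ of $\mathbb{Z}$-graded rings. For injectivity: if $\sum_n b_nf^n=0$ with $b_n\in B_0$, the summands $b_nf^n$ lie in the pairwise distinct homogeneous components $B_{nd}$ of $B$ (distinct since $d>0$), so $b_nf^n=0$ for each $n$, and multiplying by $f^{-n}$ gives $b_n=0$. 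For surjectivity: any $b\in B_{nd}$ satisfies $bf^{-n}\in B_0$ and $b=\phi_f\big((bf^{-n})t^n\big)$, so the image contains every homogeneous component $B_{nd}$ of $B^{(d)}$, hence all of $B^{(d)}$; a bijective morphism of graded rings is an isomorphism.

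For (2), I would first recall that the nilradical of a $\mathbb{Z}$-graded ring is a homogeneous ideal, so $B_{\mathrm{red}}$ inherits a $\mathbb{Z}$-grading with $(B_{\mathrm{red}})_m$ the image of $B_m$, and that $(B_{\mathrm{red}})_0=B_0$ since the field $B_0$ is reduced. Writing $\bar f$ for the image of $f$, it is a homogeneous unit of $B_{\mathrm{red}}$ of degree $d$, and $d$ stays minimal: a homogeneous unit $\bar b\in (B_{\mathrm{red}})_m$ lifts to a homogeneous $b\in B_m$ with $b\cdot(\text{a lift of }\bar b^{-1})=1+\eta$ for some nilpotent $\eta$, hence a unit of $B$, so $b$ itself is a unit of $B$ and $m\geq d$ if $m>0$. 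The key step — the one I expect to require the most care — is the claim that $(B_{\mathrm{red}})_m=0$ whenever $d\nmid m$. After multiplying by a suitable power of $\bar f$ one may assume $0<m<d$; if $\bar b\in (B_{\mathrm{red}})_m$ is nonzero then $\bar b^{\,d}\neq 0$ because $B_{\mathrm{red}}$ is reduced, and $\bar b^{\,d}\bar f^{-m}\in (B_{\mathrm{red}})_0=B_0$ is nonzero (otherwise $\bar b^{\,d}=0$), hence invertible in the field $B_0$; this exhibits $\bar b$ as a homogeneous unit of degree $m$ with $0<m<d$, contradicting minimality.

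Finally I would assemble the composite $\psi\colon B_0[t,t^{-1}]\xrightarrow{\phi_f}B^{(d)}\hookrightarrow B\to B_{\mathrm{red}}$, which sends $t^n\mapsto\bar f^{\,n}\in(B_{\mathrm{red}})_{nd}$. Injectivity follows exactly as in (1), using $(B_{\mathrm{red}})_0=B_0$ and that $\bar f$ is a unit. For surjectivity, the vanishing claim gives $B_{\mathrm{red}}=\bigoplus_{n}(B_{\mathrm{red}})_{nd}$, and each $(B_{\mathrm{red}})_{nd}$ is the image of $B_{nd}=B_0f^n$ (by part (1)), namely $B_0\bar f^{\,n}=\psi(B_0t^n)$; hence $\psi$ is a bijective homomorphism of graded rings, i.e.\ an isomorphism. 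The only genuine subtlety is the $d$-th-power trick combined with "$B_{\mathrm{red}}$ reduced and $B_0$ a field" in the vanishing claim; everything else is bookkeeping with homogeneous components.
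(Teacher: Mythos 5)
Your proof is correct and follows essentially the same route as the paper: the heart in both cases is the observation that a homogeneous element of degree $e$ with $d\nmid e$ must die in $B_{\mathrm{red}}$, proved by raising to the $d$-th power, multiplying by a power of $f$ to land in the field $B_0$, and invoking the minimality of $d$. The only cosmetic difference is that you argue by contraposition inside $B_{\mathrm{red}}$ (hence need the small extra step that homogeneous units lift from $B_{\mathrm{red}}$ to $B$), whereas the paper works directly in $B$ and shows such elements satisfy $g^d=0$.
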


\begin{proof}
Let $m\in {\mathbb Z}$. Since $f\in B_d\cap B^*$, the restriction of $\phi_f$ to the homogeneous part of some degree $m\in {\mathbb Z}$
$$\begin{matrix}
& & B_0t^m&\longrightarrow& B_{dm}\\
~&~&b_0t^m&\mapsto&b_0f^m
\end{matrix}$$
is an isomorphism of ${\mathbb Z}$-modules, from which the first item follows.

\medskip

For the second item, we begin by proving that for any $e\notin d{\mathbb Z}$ and $g \in B_e$, we have $g^d=0_B$. Considering the Euclidean division $e=dq+r$ with $0<r<d$ of $e$ by $d$, we have that $gf^{-q} \in B_{e-dq}=B_r$ with $r>0$, so by minimality of $d$ we deduce that $gf^{-q}\notin B^*$. Since $f\in B^*$, it follows that $g\notin B^*$, and then that $g^df^e \notin B^*$. But $g^df^e \in B_{ed-de}=B_0$ which is a field, so $g^df^e=0_B$, hence $g^d=0_B$.  

Now, let $\frak N$ be the nilradical of $B$ and let $\pi : B \rightarrow B_{\hbox{red}}=B/{\frak N}$ be the canonical morphism. We have to prove, thanks to the first item, that the graded ring morphism
$$\pi_{\vert_{B^{(d)}}} : B^{(d)} \hookrightarrow B \overset{\pi}\rightarrow B/{\frak N}$$
is an isomorphism. 

The morphism $\pi$ is onto from $B= B^{(d)} \oplus \left(\bigoplus_{e\notin d{\mathbb Z}}B_e\right)$ to $B/{\frak N}$ and sends the right part $\bigoplus_{e\notin d{\mathbb Z}}B_e$ to $0_B$ by the previous paragraph, so $\pi$ remains onto from the first part $B^{(d)}$.

Now let  $h\in B^{(d)}\cap \Ker( \pi)$ and $b_{0,m}f^m$ be the homogeneous part of some degree $dm$ with $b_{0,m}\in B_0$. 
Since $\pi$ is a graded ring morphism,
 we have $b_{0,m}f^m\in\Ker(\pi)={\frak N}$. From $f \in B^*$ we deduce that $b_{0,m}\in {\frak N}\cap B_0$ is a  nilpotent element in the field $B_0$, hence is equal to zero. We conclude that $\pi_{\vert_{B^{(d)}}}$ is an isomorphism.
\end{proof}

Consider
a rational point of a weighted projective space over a finite field $k$ with $q$ elements. Starting from a rational representative
whose existence follows from Proposition \ref{LMB-projPond}, one can prove (see Lemma 7 in \cite{Marc}) that it has exactly  $q-1$ representatives in 
$k^{n+1}\setminus \{0\}$.
In particular  we have
$\sharp {\mathbb P}(a_0,\ldots,a_n)({\mathbb F}_q)=p_n.$

For many more details about weighted projective spaces, one can consult the article of Beltrametti and Robbiano (see \cite{Beltrametti-Robbiano}) for a theory over an algebraically closed field of characteristic 0, the article of Dolgachev (see \cite{Dolgachev}) for a theory over a field of characteristic prime to all the $a_i$'s, and the Appendix of  \cite{UCLA} for a survey of the different points of view.

%%%%%%%%%%%%%%%%%%

%%%%%%%%%%%%%%%%%%

\subsection{A lower bound}

Let $F$ be  a  homogeneous polynomial in $S$  of degree $d$, so that
$$F(\lambda^{a_0}X_0,\ldots,\lambda^{a_n}X_n)=\lambda^{d}F(X_0,\ldots,X_n)\ \ {\rm for\  all}\ \ \lambda \in {\overline{\mathbb F}}_q^{\ast}$$ 
and  let $V(F)$ be the  hypersurface defined by $F$ in ${\mathbb P}(a_0,\ldots,a_n)$.

We define, as in the introduction, the quantity:
$$e_q(d ; a_0,\ldots,a_n):=\max_{F\in S_d\setminus\{0\}}\sharp V(F)({\mathbb F}_q)$$
where $S_d$ stands for the space of weighted homogeneous polynomials in $S$  of weighted degree $d$.
Remark that the previous quantity is only defined for $d\in a_0{\mathbb N}+\cdots+a_n{\mathbb N}$.

Consider now the polynomial
$$F=\prod_{i=1}^{d/a_{rs}}(\alpha_iX_r^{a_{rs}/a_r} - \beta_iX_s^{a_{rs}/a_s})$$
where $r,s\in\{0,\ldots, n\}$ are distincts indices, $a_{rs}=\lcm(a_r,a_s)$, $d$ is a multiple of $a_{rs}$ satisfying $d\leq a_{rs}(q+1)$ and the $(\alpha_i,\beta_i)$'s are distinct elements of ${\mathbb P}^1({\mathbb F}_q)$.
It has been proved  in \cite{UCLA}  that $\sharp V(F)({\mathbb F}_q)=(d/a_{rs})q^{n-1}+p_{n-2}.$
So, if $a:=\min\{\lcm(a_r,a_s), 0\leq r<s\leq n\}$ and  $a\mid d$, then it implies that
$$e_q(d ; a_0,\ldots,a_n)\geq \min\{p_n,\frac{d}{a}q^{n-1}+p_{n-2}\}.$$

%%%%%%%%%%%%%%%%%%%%%%%%%%%%%%%%%%%%%%%%%

%%%%%%%%%%%%%%%%%%%%%%%%%%%%%%%%%%%%%%%%%

%%%%%%%%%%%%%%%%%%%%%%%%%%%%%%%%%%%%%%%%%

\section{Some morphisms between weighted projective spaces}\label{section-pullback}

%%%%%%%%%%%%%%%%%%
\subsection{The morphisms $\pi_i$}
For $i=0,\ldots,n$, we consider  the following morphims $\pi_{i}$ : 

$$
\begin{matrix}

\pi_{i} &:& {\mathbb P}(a_0,\ldots,a_{i-1},1,a_{i+1},\ldots,a_n)&\longrightarrow & {\mathbb P}(a_0,\ldots,a_n)\cr
&&[x_0:\cdots:x_n]&\longmapsto & [x_0:\cdots:x_{i}^{a_{i}}:\cdots:x_n].\cr

\end{matrix}
$$

Our purpose in this Section is to study the behaviour of the rational points with respect to  these morphisms. For this purpose, let us fix some generator $\delta$ of the multiplicative group ${\mathbb F}_q^{\ast}$.

\medskip

For any given $i\in\{0,\ldots,n\}$, set $r_i=(a_i,q-1)$ the gcd of $a_i$ and $q-1$ and consider the map $\varphi_{a_i}$: 

$$
\begin{matrix}

\varphi_{a_i} &:& {\mathbb F}_q^{\ast}&\longrightarrow & {\mathbb F}_q^{\ast}\cr
&&z&\longmapsto & z^{a_i}.\cr

\end{matrix}
$$

Recall that the map $\varphi_{a_i}$ is a group homomorphism with kernel
$\Ker(\varphi_{a_i})=<\delta^{\frac{q-1}{r_i}}>=:\mu_{a_i}$,
the subgroup of ${\mathbb F}_q^{\ast}$ of $a_i$-th roots of unity in ${\mathbb F}_q^{\ast}$ which has order $r_i$,
and with image
$\Ima(\varphi_{a_i})=<\delta^{a_i}>=:\Delta^{a_i}$,
the subgroup of ${\mathbb F}_q^{\ast}$ of $a_i$-th powers which has order $\frac{q-1}{r_i}$.

%%%%%%%%%%%%%%%%%%

Let $\mathcal P$ be the whole set of rational points over ${\mathbb F}_q$ of ${\mathbb P}(a_0,\ldots,a_n)$.
We have a partition ${\mathcal P}={\mathcal R}_i\cup {\mathcal T}_i\cup {\mathcal I}_i$ with respect to the $i$-th coordinate, where
 $${\mathcal R}_i:=\{[y_0:\cdots:y_n]\in {\mathbb P}(a_0,\ldots,a_n)({\mathbb F}_q) \mid y_i=0\}\cup \{{\mathcal O}_i\},$$

 $${\mathcal T}_i:=\{[y_0:\cdots:y_n]\in {\mathbb P}(a_0,\ldots,a_n)({\mathbb F}_q) \mid y_i=1\} \setminus \{{\mathcal O}_i\},$$
 
$${\mathcal I}_i:=\{[y_0:\cdots:y_n]\in {\mathbb P}(a_0,\ldots,a_n)({\mathbb F}_q) \mid y_i \in {\mathbb F}_q^{\ast}\setminus \Delta^{a_i}\}$$
and  ${\mathcal O}_i:=[0:\cdots:0:1:0:\cdots :0]$ is the point where $1$ appears at the index $i$.

Let us scrutinize more narrowly the sets ${\mathcal I}_i$ 
and ${\mathcal T}_i$.
In order to do this, consider,  for $j\in\{1, \ldots,q-1\}$, the sets
${\mathcal Z}_i(j)$ defined by
 $${\mathcal Z}_i(j):=\{[y_0:\cdots:y_n]\in {\mathbb P}(a_0,\ldots,a_n)({\mathbb F}_q) \mid y_i=\delta^j\}.$$

\begin{lemma}\label{LesZi}

We have:

\begin{enumerate}[label=(\roman*)]

\item 
${\mathcal Z}_i(j_1)={\mathcal Z}_i(j_2)$ if $j_1\equiv j_2 \pmod{r_i}$.

\item ${\mathcal Z}_i(r_i)={\mathcal T}_i$.

\item ${\mathcal I}_i=\emptyset$ if $r_i=1$ and 

$${\mathcal I}_i={\mathcal Z}_i(1)\cup \ldots \cup {\mathcal Z}_i(r_i-1)$$
otherwise.

\end{enumerate}
\end{lemma}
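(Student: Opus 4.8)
The plan is to reduce all three assertions to the coset structure of the subgroup $\Delta^{a_i}=\Ima(\varphi_{a_i})$ of $a_i$-th powers inside the cyclic group $\mathbb F_q^\ast=\langle\delta\rangle$ of order $q-1$. First I would record two elementary facts. Since $r_i=(a_i,q-1)$ divides $q-1$, the subgroup $\Delta^{a_i}$ has order $(q-1)/r_i$, hence coincides with the unique subgroup $\langle\delta^{r_i}\rangle$ of that order; consequently $\mathbb F_q^\ast/\Delta^{a_i}$ is cyclic of order $r_i$, the powers $\delta^0,\delta^1,\dots,\delta^{r_i-1}$ form a complete system of coset representatives, and $\delta^{j_1}$, $\delta^{j_2}$ lie in the same coset precisely when $j_1\equiv j_2\pmod{r_i}$. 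Secondly, if $(y_0,\dots,y_n)\in\mathbb F_q^{n+1}\setminus\{0\}$ represents a rational point $P$ and $\mu\in\mathbb F_q^\ast$, then $(\mu^{a_0}y_0,\dots,\mu^{a_n}y_n)$ is again an $\mathbb F_q$-rational representative of $P$ whose $i$-th coordinate equals $\mu^{a_i}y_i$ with $\mu^{a_i}\in\Delta^{a_i}$; conversely every element of $\Delta^{a_i}$ is of the form $\mu^{a_i}$, $\mu\in\mathbb F_q^\ast$, because $\Delta^{a_i}=\Ima(\varphi_{a_i})$.

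For (i) I would argue as follows. Assume $j_1\equiv j_2\pmod{r_i}$ and take $P\in\mathcal Z_i(j_1)$, with a representative whose $i$-th coordinate is $\delta^{j_1}$. Then $\delta^{j_2-j_1}\in\langle\delta^{r_i}\rangle=\Delta^{a_i}$, so $\mu^{a_i}=\delta^{j_2-j_1}$ for some $\mu\in\mathbb F_q^\ast$; scaling the given representative by $\mu$ produces an $\mathbb F_q$-rational representative of $P$ whose $i$-th coordinate is $\mu^{a_i}\delta^{j_1}=\delta^{j_2}$, hence $P\in\mathcal Z_i(j_2)$. By symmetry $\mathcal Z_i(j_1)=\mathcal Z_i(j_2)$.

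For (ii) I would apply (i) with $j_1=r_i$ and $j_2=q-1$, legitimate since $r_i\equiv 0\equiv q-1\pmod{r_i}$, to get $\mathcal Z_i(r_i)=\mathcal Z_i(q-1)$; since $\delta^{q-1}=1$, this is the set of rational points distinct from $\mathcal O_i$ admitting a representative with $i$-th coordinate $1$, which is exactly $\mathcal T_i$. For (iii), if $r_i=1$ then $\varphi_{a_i}$ is an automorphism of $\mathbb F_q^\ast$, so $\Delta^{a_i}=\mathbb F_q^\ast$, $\mathbb F_q^\ast\setminus\Delta^{a_i}=\emptyset$ and $\mathcal I_i=\emptyset$. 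If $r_i\ge 2$, the cosets $\delta^0\Delta^{a_i},\dots,\delta^{r_i-1}\Delta^{a_i}$ partition $\mathbb F_q^\ast$ and $\delta^0\Delta^{a_i}=\Delta^{a_i}$, so $\mathbb F_q^\ast\setminus\Delta^{a_i}=\delta^1\Delta^{a_i}\cup\cdots\cup\delta^{r_i-1}\Delta^{a_i}$; hence $P\in\mathcal I_i$ iff $P$ has a representative with $i$-th coordinate in $\delta^j\Delta^{a_i}$ for some $j\in\{1,\dots,r_i-1\}$, iff (by the scaling remark, equivalently by (i)) $P$ has a representative with $i$-th coordinate equal to $\delta^j$ for some such $j$, iff $P\in\mathcal Z_i(1)\cup\cdots\cup\mathcal Z_i(r_i-1)$.

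The step I expect to be the main obstacle is not the group theory above but the correct handling of the exceptional point $\mathcal O_i$: its single nonzero coordinate may be rescaled to any value of $\mathbb F_q^\ast$, so $\mathcal O_i$ would otherwise lie in every $\mathcal Z_i(j)$ as well as in $\mathcal I_i$, and it must be excluded — this is exactly why it is placed in $\mathcal R_i$ and removed from $\mathcal T_i$ in the definitions. Keeping track of which rescalings of a given representative remain $\mathbb F_q$-rational, so that away from $\mathcal O_i$ the sets $\mathcal T_i$, $\mathcal I_i$ and $\mathcal Z_i(j)$ genuinely match the coset picture, is the only point that needs real care; everything else is the cyclic-group bookkeeping sketched above.
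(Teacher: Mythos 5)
Your proof is correct and follows essentially the same route as the paper: both arguments rest on the identity $\Delta^{a_i}=\langle\delta^{r_i}\rangle$ (which you obtain from the order of $\Ima(\varphi_{a_i})$ inside the cyclic group $\mathbb{F}_q^{\ast}$, while the paper writes $\delta^{r_i}=\lambda^{a_i}$ directly via a B\'ezout relation $r_i=ua_i+v(q-1)$) combined with rescaling a representative by $\mu$ to multiply the $i$-th coordinate by $\mu^{a_i}$. Your closing remark about the exceptional point $\mathcal{O}_i$ is well taken — the paper's own proof does not address it — but it does not change the substance of the argument.
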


\begin{proof}
We begin by proving that $\delta^{r_i}=\lambda^{a_i}$ for some $\lambda \in {\mathbb F}_q^{\ast}$, which will be used in the proof of the three items. Indeed, there exist by B\'ezout Theorem some integers $u, v$ such that $r_i=ua_i+v(q-1)$, so that $\delta^{r_i}=(\delta^{u})^{a_i}\times (\delta^{q-1})^{v}=\lambda^{a_i}$ for $\lambda = \delta^{u}$. 

Suppose now that $j_2=j_1+mr_i$ for some integer $m$ and consider some $[y_0: \cdots: y_n] \in {\mathcal Z}_i(j_2)$. By writing $\delta^{r_i}=\lambda^{a_i}$, it is easily checked from $\delta^{j_2}=(\delta^{r_i})^m\times \delta^{j_1}=(\lambda^m)^{a_i}\times \delta^{j_1}$ that
$[y_0: \cdots: y_{i-1}: \delta^{j_2},y_{i+1}:\cdots,y_n]=[(\lambda^{-m})^{a_0}y_0: \cdots : (\lambda^{-m})^{a_{i-1}}y_{i-1} : \delta^{j_1}:(\lambda^{-m})^{a_{i+1}}y_{i+1}:\cdots:(\lambda^{-m})^{a_n}y_n]$
which lies in ${\mathcal Z}_i(j_1)$, so that ${\mathcal Z}_i(j_2) \subset {\mathcal Z}_i(j_1)$. The reverse inclusion follows similarly.

The second item can be proved likewise by writing $\delta^{r_i}=\lambda^{a_i}$, since then
$[y_0:\cdots: y_{i-1}: \delta^{r_i}:y_{i+1}:\cdots:y_n]=[(\lambda^{-1})^{a_0}y_0: \cdots: (\lambda^{-1})^{a_{i-1}}y_{i-1}: 1:(\lambda^{-1})^{a_{i+1}}y_{i+1}:\cdots:(\lambda^{-1})^{a_n}y_n]$.

Finally, the set ${\mathcal I}_i$ contains of course the union ${\mathcal Z}_i(1)\cup \ldots \cup {\mathcal Z}_i(r_i-1)$. Conversely, given some
$P=[y_0:\cdots:y_{i-1}:\delta^{h}:y_{i+1}:\cdots:y_n]\in{\mathcal I_i}$ with $1\leq h\leq q-1$ and $h$ not divisible by $a_i$,  then writing the Euclidean division of $h$ by $r_i$ gives the existence of integers $m$ and $j$ such that $h=r_im+j$ with $0\leq j\leq r_i-1$. Thus, writing 
$\delta^h=(\delta^{r_i})^m\times \delta^j=(\lambda^m)^{a_i}\times \delta^j$,  we get
$[y_0:\cdots : y_{i-1}:\delta^{h}:y_{i+1}:\cdots:y_n]=
[(\lambda^{-m})^{a_0}y_0 : \cdots : (\lambda^{-m})^{a_{i-1}}y_{i-1} : \delta^j :(\lambda^{-m})^{a_{i+1}}y_{i+1}:\cdots:(\lambda^{-m})^{a_{n}}y_n]$, so that $P\in {\mathcal Z}_i(j)$ for this $j\in \{1, \cdots, r_i-1\}$ which concludes the proof.

\end{proof}

The following proposition describes the number of pre-images of points by the morphism $\pi_i$ according to the set of the partition that they belong to.

\begin{proposition}\label{antecedent}
Let $P$ be a rational point of ${\mathbb P}(a_0,\ldots,a_n)$.
\begin{enumerate}[label=(\roman*)]

\item If $P\in {\mathcal R}_i$ then $P$ has exactly one  pre-image rational over ${\mathbb F}_q$ by $\pi_i$.

\item If $P\in {\mathcal T}_i$ then $P$ has exactly $r_i$  pre-images rational over ${\mathbb F}_q$ by $\pi_i$.

\item If $P\in {\mathcal I}_i$ then $P$ has no  pre-image rational over ${\mathbb F}_q$ by $\pi_i$.
\end{enumerate}
\end{proposition}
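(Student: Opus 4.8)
The plan is to analyze the fibers of $\pi_i$ directly at the level of representatives, using the rational representative of Proposition~\ref{LMB-projPond} together with the description of the fibers of $\varphi_{a_i}$ in terms of $\mu_{a_i}$ and $\Delta^{a_i}$, and the partition ${\mathcal P}={\mathcal R}_i\cup{\mathcal T}_i\cup{\mathcal I}_i$ refined by Lemma~\ref{LesZi}. Fix $i$ and write a rational point $P\in{\mathbb P}(a_0,\ldots,a_n)({\mathbb F}_q)$ with a representative $(y_0,\ldots,y_n)\in{\mathbb F}_q^{n+1}\setminus\{0\}$. A pre-image of $P$ under $\pi_i$ is a rational point $Q=[x_0:\cdots:x_n]$ in the source space (which has weight $1$ in position $i$) such that $[x_0:\cdots:x_i^{a_i}:\cdots:x_n]=[y_0:\cdots:y_n]$; since $Q$ is rational it also admits a rational representative $(x_0,\ldots,x_n)\in{\mathbb F}_q^{n+1}\setminus\{0\}$, and the identity of points means there is $\lambda\in\overline{\mathbb F}_q^\ast$ with $x_j=\lambda^{a_j}y_j$ for $j\neq i$ and $x_i^{a_i}=\lambda^{a_i}y_i$.

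First I would treat case (iii), $P\in{\mathcal I}_i$: here some rational representative has $y_i\in{\mathbb F}_q^\ast\setminus\Delta^{a_i}$, but one must be careful that "being in $\Delta^{a_i}$'' is not representative-independent, so the cleaner route is via Lemma~\ref{LesZi}(iii), writing $P\in{\mathcal Z}_i(j)$ for some $1\le j\le r_i-1$, i.e. $P$ has a representative with $i$-th coordinate $\delta^j$. If $Q$ were a rational pre-image, then comparing the two rational representatives $(x_0,\ldots,x_n)$ and $(\ldots,\delta^j,\ldots)$ of $\pi_i(Q)=P$ forces, after scaling, $x_i^{a_i}=\lambda^{a_i}\delta^j$ with $\lambda^{a_j}y_j\in{\mathbb F}_q$ for all $j\ne i$; chasing through one finds $\delta^j$ must lie in $\Delta^{a_i}\cdot(\text{something already an }a_i\text{-th power})$, contradicting $r_i\nmid j$. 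I expect this bookkeeping — relating the ambiguity $\lambda$ to membership in $\Delta^{a_i}$ and hence to divisibility by $r_i$ — to be the main obstacle, essentially the same computation that proves Lemma~\ref{LesZi}.

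Next, case (i), $P\in{\mathcal R}_i$: if $P$ has a rational representative with $y_i=0$, then any rational pre-image has $x_i^{a_i}=0$, so $x_i=0$, and $Q=[x_0:\cdots:0:\cdots:x_n]$ must satisfy $x_j=\lambda^{a_j}y_j$; since $Q$ and $P$ live (via the puncturing maps of Proposition~\ref{LMB-projPond}) on isomorphic loci where the $i$-th coordinate is deleted, the pre-image is unique and manifestly rational, namely the point obtained by inserting a $0$ in position $i$ into the rational representative of $P$. The remaining boundary point ${\mathcal O}_i=[0:\cdots:1:\cdots:0]$ is handled separately: its only pre-image is $[0:\cdots:1:\cdots:0]$ in the source, which is rational and unique (here $a_i$-th root of $1$ in position $i$ is forced to be a single point because all other coordinates vanish, so $\mu_{a_i}$ acts trivially on the projective class). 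Finally, case (ii), $P\in{\mathcal T}_i$: by Lemma~\ref{LesZi}(ii) take the representative with $y_i=1$; a rational pre-image $Q$ with representative $(x_0,\ldots,x_n)$ then satisfies $x_i^{a_i}=\lambda^{a_i}$ for some $\lambda$, and after replacing $(x_0,\ldots,x_n)$ by $\lambda^{-1}\cdot(x_0,\ldots,x_n)$ (legitimate since that is again a rational representative, $\lambda^{a_i}=x_i^{a_i}\in{\mathbb F}_q$ forces $\lambda\in{\mathbb F}_q^\ast$ up to an element of $\mu_{a_i}$) we may assume $x_i\in\mu_{a_i}=\Ker\varphi_{a_i}$ and $x_j=y_j$ for $j\ne i$. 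Conversely each of the $r_i$ elements $\zeta\in\mu_{a_i}$ gives a rational point $Q_\zeta=[y_0:\cdots:\zeta:\cdots:y_n]$ with $\pi_i(Q_\zeta)=P$, and two such coincide in the source (weight $1$ at $i$) iff $\zeta=\zeta'$ because the only rescaling fixing the $j\ne i$ coordinates with $y_j$ not all zero is by an $(q-1)$-st... — more precisely by an element $\nu$ with $\nu^{a_j}=1$ for all $j$ with $y_j\ne 0$, and combined with $\nu$ acting as multiplication on the weight-$1$ coordinate this forces $\nu=1$ once $P\ne{\mathcal O}_i$. Hence exactly $r_i$ distinct rational pre-images. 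I would organize the write-up as: a preliminary paragraph extracting the representative-level statement ``$Q\mapsto P$ iff after scaling $x_j=y_j$ ($j\ne i$) and $x_i\in\mu_{a_i}$'', then the three short cases; the delicate point throughout, and the one I'd flag as the crux, is justifying that the scaling parameter $\lambda$ may be taken in ${\mathbb F}_q^\ast$, which is exactly where $y_i\in\Delta^{a_i}$ (cases i, ii) versus $y_i\notin\Delta^{a_i}$ (case iii) enters.
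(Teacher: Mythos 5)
Your strategy is the same as the paper's: both proofs describe the fiber of $\pi_i$ over $P$ explicitly at the level of representatives (the point of ${\mathcal R}_i$ with a $0$ or a $1$ reinserted in case (i), the $r_i$ points $[y_0:\cdots:\zeta:\cdots:y_n]$ with $\zeta\in\mu_{a_i}$ in case (ii), the empty fiber in case (iii) because $y_i$ is not an $a_i$-th power), and you correctly identify the crux, namely that the scalar $\lambda$ relating two rational representatives must be shown to lie in ${\mathbb F}_q^{\ast}$. The problem is that this is exactly the point your write-up does not establish, and the one justification you do offer for it is false as stated: from $\lambda^{a_i}=x_i^{a_i}\in{\mathbb F}_q^{\ast}$ you conclude that $\lambda\in{\mathbb F}_q^{\ast}$ ``up to an element of $\mu_{a_i}$'', but all this gives is $\lambda=x_i\zeta$ with $\zeta^{a_i}=1$ in ${\overline{\mathbb F}}_q^{\ast}$; such a $\zeta$ has order dividing $a_i$, not $r_i=(a_i,q-1)$, so it need not lie in ${\mathbb F}_q$, and the rescaled tuple $\lambda^{-1}\cdot(x_0,\ldots,x_n)$ need not be a rational representative. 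To pin $\lambda$ down in ${\mathbb F}_q^{\ast}$ one must use a coordinate $k\neq i$ with $y_k\neq 0$ (available since $P\neq{\mathcal O}_i$), which yields $\lambda^{a_k}=x_k/y_k\in{\mathbb F}_q^{\ast}$ in addition to $\lambda^{a_i}\in{\mathbb F}_q^{\ast}$, and then a B\'ezout relation among $a_i$ and the weights $a_k$ on the support of $P$ --- i.e.\ a coprimality condition.

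This is not a cosmetic issue, because the two steps you leave as ``chasing through'' are precisely the ones that fail without such a condition. For the pairwise distinctness of the $Q_\zeta$ in case (ii), your ``this forces $\nu=1$'' is not automatic: for $\pi_0:{\mathbb P}(1,3)\to{\mathbb P}(3,3)$ with $q\equiv 1\pmod 3$ one has $[\zeta:1]=[1:\zeta^{-3}]=[1:1]$ in ${\mathbb P}(1,3)$ for every $\zeta\in\mu_3$, so the three alleged pre-images of $[1:1]$ coincide. For case (iii), the representative-independence of the condition $y_i\notin\Delta^{a_i}$ is likewise not automatic: in ${\mathbb P}(2,2)$ over ${\mathbb F}_5$ one has $[2:2]=[1:1]=\pi_0([1:1])$ with $[1:1]\in{\mathbb P}(1,2)({\mathbb F}_5)$, so a point of ${\mathcal I}_0$ acquires a rational pre-image. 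In fairness, the paper's own proof is a bare description of the fibers and is equally silent on the control of $\lambda$; but since you single this out as the delicate point, your proof cannot be considered complete until the needed hypothesis on the weights (or on the support of $P$) is made explicit and actually used in the $\lambda$-bookkeeping.
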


\begin{proof}
$(i)$ The point ${\mathcal O}_i:=[0:\cdots:0:1:0:\cdots :0]\in {\mathbb P}(a_0,\ldots,a_n)$ has only one pre-image by $\pi_i$, namely the point $[0:\cdots:0:1:0:\cdots :0]\in {\mathbb P}(a_0,\ldots,a_{i-1},1,a_{i+1},\ldots,a_n)$.
Moreover, the point $[y_0:\cdots:y_{i-1}: 0 :y_{i+1}:\cdots:y_n]$ has only one pre-image by $\pi_i$, that is  the point 
$[y_0:\cdots:y_{i-1}: 0 :y_{i+1}:\cdots:y_n]$.

$(ii)$ The point $[y_0:\cdots:y_{i-1}: 1 :y_{i+1}:\cdots:y_n]$ has $r_i$ pre-images by $\pi_i$, which are precisely the points 
$[y_0:\cdots:y_{i-1}: \delta^{\frac{(q-1)k}{r_i}} :y_{i+1}:\cdots:y_n]$
 for $k=1,\dots,r_i$ (the elements $\delta^{\frac{(q-1)k}{r_i}}$ are the $a_i$-th roots of unity in ${\mathbb F}_q^{\ast}$ i.e. the elements of the group $\mu_{a_i}$).

$(iii)$ The points  $[y_0:\cdots:y_n]$ with
$y_i\not\in\Delta^{a_i}$
have no rational pre-image by $\pi_i$ since $y_i$ is not a $a_i$-th power in ${\mathbb F}_q^{\ast}$.
\end{proof}

%%%%%%%%%%%%%%%%%%%%%%%%%%%%%%%%

%%%%%%%%%%%%%%%%%%%%%%%%%%%%%%%%

\subsection{Number of zeros of the pullback}

Let $F$ be a homogeneous polynomial in ${\mathbb F}_q[X_0,\ldots,X_n]$ of  $(a_0,\ldots,a_n)$-weighted degree $d\leq q+1$, i.e.
$$F(\lambda^{a_0} X_0, \ldots,\lambda^{a_n} X_n)=\lambda^d F(X_0,\ldots,X_n)$$
for any $\lambda\in {\overline{\mathbb F}}_q^{\ast}$.
Let 
$$\pi_{i}^{\ast}F(X_0,\ldots,X_n):=(F\circ\pi_{i})(X_0,\ldots,X_n)=F(X_0,\ldots, X_{i}^{a_i}, \ldots,X_n)$$
 be the pullback of $F$, an homogeneous polynomial of 
$(a_0,\ldots,a_{i-1},1,a_{i+1},\ldots,a_n)$-weighted degree $d$.
We consider the hypersurface $V_{{\mathbb P}(a_0,\ldots,a_n)}(F)$ of zeros of $F$ in ${\mathbb P}(a_0,\ldots,a_n)$ 
whose number of rational points over ${\mathbb F}_q$ is denoted by $N(F)$.
We also consider the hypersurface $V_{{\mathbb P}(a_0,\ldots,a_{i-1},1,a_{i+1},\ldots,a_n)}(\pi^{\ast}F)$ of zeros of $\pi^{\ast}F$ in ${\mathbb P}(a_0,\ldots,a_{i-1},1,a_{i+1},\ldots,a_n)$ 
whose number of rational points over ${\mathbb F}_q$ is denoted by $N(\pi_{i}^{\ast}F)$.

Let us set:
$$A(F):=\sharp(V_{{\mathbb P}(a_0,\ldots,a_n)}(F)\cap {\mathcal A})$$
for ${\mathcal A}\in\{{\mathcal R}_i,{\mathcal T}_i,{\mathcal I}_i, {\mathcal Z}_i(j)\}$.
So, $N(F)$ denotes the number of rational points of $V_{{\mathbb P}(a_0,\ldots,a_n)}(F)$ and $R_i(F), T_i(F), I_i(F)$ and $Z_i(j)(F)$ denote the number of those rational points  lying on ${\mathcal R}_i, {\mathcal T}_i, {\mathcal I}_i$ and ${\mathcal Z}_i(j)$ respectively.

\begin{proposition}\label{identites}
We have :

\begin{enumerate}[label=(\roman*)]
\item $$N(F)=R_i(F)+T_i(F)+I_i(F).$$

\item
$$N(\pi_i^{\ast}F)=r_iT_i(F)+R_i(F).$$

\item Consider the automorphism $\sigma_i : [y_0:\cdots:y_n]
 \longmapsto [y_0:\cdots:y_{i-1}: \delta y_i :y_{i+1}:\cdots:y_n]$ of ${\mathbb P}(a_0,\ldots,a_n)$.
If $r_i:=(a_i,q-1)\not=1$ then:
	\begin{enumerate}
	\item for $j=1,\ldots,r_i-1$, we have $T_i(F\circ\sigma_i^j)=Z_i(j)(F)$,
	\item for $j=r_i-1$, we have $T_i(F\circ\sigma_i^j)=T_i(F)$
	\item and $R_i(F)=R_i(F\circ\sigma_i^j)$ for $1\leq j\leq r_i-1$.
	\end{enumerate}
\end{enumerate}
\end{proposition}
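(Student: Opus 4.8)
The plan is to derive all four identities from three ingredients only: the partition $\mathcal{P}=\mathcal{R}_i\sqcup\mathcal{T}_i\sqcup\mathcal{I}_i$ introduced above, Proposition~\ref{antecedent} on the rational fibres of $\pi_i$, and Lemma~\ref{LesZi} on the cells $\mathcal{Z}_i(j)$. Once the geometry of the cells and of the maps $\pi_i,\sigma_i$ is in place, each identity becomes a one-line counting argument, and the hypothesis $d\leq q+1$ standing in this subsection plays no role. For (i), since $V(F)(\mathbb{F}_q)\subseteq\mathcal{P}$, I would simply intersect the disjoint union $\mathcal{P}=\mathcal{R}_i\sqcup\mathcal{T}_i\sqcup\mathcal{I}_i$ with $V(F)$ and count, which gives $N(F)=R_i(F)+T_i(F)+I_i(F)$.

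For (ii), the key observation is that $\pi_i$ is defined over $\mathbb{F}_q$ and $\pi_i^{\ast}F(Q)=F(\pi_i(Q))$, so a rational point $Q$ of $\mathbb{P}(a_0,\ldots,a_{i-1},1,a_{i+1},\ldots,a_n)$ lies on $V(\pi_i^{\ast}F)$ if and only if $\pi_i(Q)$ is a rational point of $V(F)$. Hence $V(\pi_i^{\ast}F)(\mathbb{F}_q)$ is the disjoint union, over $P\in V(F)(\mathbb{F}_q)$, of the sets of rational points of the fibres $\pi_i^{-1}(P)$. Splitting $V(F)(\mathbb{F}_q)$ along $\mathcal{R}_i\sqcup\mathcal{T}_i\sqcup\mathcal{I}_i$ and reading off the fibre cardinalities from Proposition~\ref{antecedent} (respectively $1$, $r_i$ and $0$) gives $N(\pi_i^{\ast}F)=R_i(F)+r_iT_i(F)$.

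For (iii), I would first note that $F\circ\sigma_i^{\,j}$ is again weighted homogeneous of degree $d$ on $\mathbb{P}(a_0,\ldots,a_n)$, and that $\sigma_i^{\,j}$, being an automorphism defined over $\mathbb{F}_q$, restricts to a bijection on $\mathbb{F}_q$-points with $V(F\circ\sigma_i^{\,j})(\mathbb{F}_q)=(\sigma_i^{\,j})^{-1}\bigl(V(F)(\mathbb{F}_q)\bigr)$; consequently, for every cell $\mathcal{A}$ of the partition,
\[
\sharp\bigl(V(F\circ\sigma_i^{\,j})(\mathbb{F}_q)\cap\mathcal{A}\bigr)=\sharp\bigl(V(F)(\mathbb{F}_q)\cap\sigma_i^{\,j}(\mathcal{A})\bigr).
\]
It then remains to track the cells under $\sigma_i^{\,j}$. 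Since $\sigma_i$ only rescales the $i$-th coordinate by $\delta$, it fixes $\mathcal{O}_i$ and stabilizes $\{y_i=0\}$, so $\sigma_i^{\,j}(\mathcal{R}_i)=\mathcal{R}_i$; taking $\mathcal{A}=\mathcal{R}_i$ yields (c). Likewise $\sigma_i^{\,j}$ sends a representative with $y_i=1$ to one with $y_i=\delta^{\,j}$, so it maps $\mathcal{T}_i$ bijectively onto $\mathcal{Z}_i(j)$; taking $\mathcal{A}=\mathcal{T}_i$ yields $T_i(F\circ\sigma_i^{\,j})=Z_i(j)(F)$ for $1\le j\le r_i-1$, which is (a). Finally, by Lemma~\ref{LesZi}(ii) we have $\mathcal{Z}_i(r_i)=\mathcal{T}_i$, so the same computation run with exponent $r_i$ (equivalently, the case $j=r_i-1$ followed by one further application of $\sigma_i$) closes the cycle $\mathcal{T}_i=\mathcal{Z}_i(0),\mathcal{Z}_i(1),\ldots,\mathcal{Z}_i(r_i)=\mathcal{T}_i$ and gives $T_i(F\circ\sigma_i^{\,r_i})=T_i(F)$, which is the content of (b).

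None of the steps is a genuine obstacle; the only place demanding care is the bookkeeping in (iii) --- keeping the distinguished point $\mathcal{O}_i$ firmly inside $\mathcal{R}_i$ throughout, and using that $\sigma_i$ permutes the cells $\mathcal{Z}_i(0),\ldots,\mathcal{Z}_i(r_i-1)$ cyclically of order exactly $r_i$ (Lemma~\ref{LesZi}(i)--(ii)). I would therefore record these two facts explicitly before running the counting.
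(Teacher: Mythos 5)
Your proof is correct and follows essentially the same route as the paper: (i) from the partition $\mathcal{P}=\mathcal{R}_i\cup\mathcal{T}_i\cup\mathcal{I}_i$, (ii) by counting rational fibres of $\pi_i$ via Proposition~\ref{antecedent}, and (iii) by transporting the count along the bijection $\sigma_i^{\,j}$ and tracking how it permutes the cells $\mathcal{R}_i$, $\mathcal{T}_i$, $\mathcal{Z}_i(j)$. Your reading of item (b) as the cycle-closing identity $T_i(F\circ\sigma_i^{\,r_i})=T_i(F)$ (exponent $r_i$ rather than the printed $r_i-1$) is the right one --- taken literally, (b) together with (a) would force $\mathcal{Z}_i(r_i-1)=\mathcal{T}_i$, impossible for $r_i\neq 1$ --- and it is exactly what the paper's own proof establishes when it closes the cycle $\mathcal{T}_i\to\mathcal{Z}_i(1)\to\cdots\to\mathcal{Z}_i(r_i-1)\to\mathcal{T}_i$.
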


\begin{proof}
The first equality comes from the partition ${\mathcal P}={\mathcal R}_i\cup {\mathcal T}_i\cup {\mathcal I}_i$.

The second one  from Proposition \ref{antecedent} 
and the fact that if $P$ is a rational point over ${\mathbb F}_q$ of $V_{{\mathbb P}(a_0,\ldots, a_{i-1},1,a_{i+1},\ldots,a_n)}(\pi^{\ast}F)$ then $\pi_i(P)$ is a point of $V_{{\mathbb P}(a_0,\ldots,a_n)}(F)$ which is rational over 
${\mathbb F}_q$.

The third one follows from the fact that the automorphism $\sigma_i$ sends ${\mathcal T}_i$ to ${\mathcal Z}_i(1)$ and ${\mathcal Z}_i(j)$ to  ${\mathcal Z}_i(j+1)$ for $1\leq j \leq r_i-1$, and by Lemma \ref{LesZi}  sends  ${\mathcal Z}_i(r_i-1)$ to ${\mathcal T}_i$, and leaves ${\mathcal R}_i$ stable.
\end{proof}

Now we are enable  to prove a
 relation on the numbers of points between two floors.

\begin{proposition}\label{Mondo}
 Let $F$ be a homogeneous polynomial in ${\mathbb F}_q[X_0,\ldots,X_n]$ with respect to the weights $(a_0,a_1,\ldots,a_n)$. 
 For  $i\in\{0,\ldots,n\}$, let 
 $$
\begin{matrix}
\pi_{i} &:& {\mathbb P}(a_0,\ldots,a_{i-1},1,a_{i+1},\ldots,a_n)&\longrightarrow & {\mathbb P}(a_0,\ldots,a_n)\cr
&&[x_0:\cdots:x_n]&\longmapsto & [x_0:\cdots:x_{i}^{a_{i}}:\cdots:x_n]\cr
\end{matrix}
$$
and $\pi_{i}^{\ast}F(X_0,\ldots,X_n):=(F\circ\pi_{i})(X_0,\ldots,X_n)=F(X_0,\ldots, X_{i}^{a_i},\ldots,,X_n)$ be the pullback of $F$. 

Let also $\delta$ be a primitive element of ${\mathbb F}_q^{\ast}$, and
$\sigma _i : [y_0:\cdots:y_n]
 \longmapsto [y_0:\cdots:y_{i-1}:\delta y_i : y_{i+1}:\cdots:y_n]$ inside ${\mathbb P}(a_0,\ldots,a_n)$.
 Denote by $r_i=(a_i,q-1)$  the gcd of $a_i$ with $q-1$.

 Then, the number $N(F)$ of rational points over ${\mathbb F}_q$ of the hypersurface of the weighted projective space 
${\mathbb P}(a_0,a_1,\ldots,a_n)$ defined by $F$ satisfies
$$N(F)\leq \frac{1}{r_i}\sum_{j=0}^{r_i-1}N(\pi_{i}^{\ast}(F\circ\sigma_i^j)).$$ 

\end{proposition}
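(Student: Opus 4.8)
The plan is to average the identity in Proposition \ref{identites}(ii) over the twists of $F$ by powers of the automorphism $\sigma_i$, and then use Proposition \ref{identites}(iii) together with Proposition \ref{identites}(i) to bound the resulting average from below by $N(F)$. First, note that for each $j\in\{0,\ldots,r_i-1\}$ the polynomial $F\circ\sigma_i^j$ is again weighted homogeneous of the same degree $d\leq q+1$, so Proposition \ref{identites}(ii) applies to it and gives
$$N(\pi_i^{\ast}(F\circ\sigma_i^j))=r_i\,T_i(F\circ\sigma_i^j)+R_i(F\circ\sigma_i^j).$$
Summing over $j=0,\ldots,r_i-1$ and dividing by $r_i$ yields
$$\frac{1}{r_i}\sum_{j=0}^{r_i-1}N(\pi_i^{\ast}(F\circ\sigma_i^j))=\sum_{j=0}^{r_i-1}T_i(F\circ\sigma_i^j)+\frac{1}{r_i}\sum_{j=0}^{r_i-1}R_i(F\circ\sigma_i^j).$$

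Next I would identify the two sums on the right-hand side. For the second sum, Proposition \ref{identites}(iii)(c) gives $R_i(F\circ\sigma_i^j)=R_i(F)$ for all $j$ in the relevant range (and trivially for $j=0$), so $\frac{1}{r_i}\sum_{j=0}^{r_i-1}R_i(F\circ\sigma_i^j)=R_i(F)$. For the first sum, the term $j=0$ is $T_i(F)$, and by Proposition \ref{identites}(iii)(a) the terms $j=1,\ldots,r_i-1$ are exactly $Z_i(j)(F)$; hence, using the partition ${\mathcal I}_i={\mathcal Z}_i(1)\cup\cdots\cup{\mathcal Z}_i(r_i-1)$ from Lemma \ref{LesZi}(iii) (these sets being pairwise disjoint as they correspond to distinct values of the $i$-th coordinate), we get $\sum_{j=0}^{r_i-1}T_i(F\circ\sigma_i^j)=T_i(F)+I_i(F)$. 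The case $r_i=1$ is immediate since then the sum has the single term $j=0$ and ${\mathcal I}_i=\emptyset$.

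Putting these together, the right-hand side equals $T_i(F)+I_i(F)+R_i(F)$, which is precisely $N(F)$ by Proposition \ref{identites}(i). Thus in fact the inequality is an equality, $N(F)=\frac{1}{r_i}\sum_{j=0}^{r_i-1}N(\pi_i^{\ast}(F\circ\sigma_i^j))$, which is stronger than the asserted bound. I do not anticipate a genuine obstacle here: the only point requiring slight care is the bookkeeping for the $Z_i(j)$, namely checking that the sets ${\mathcal Z}_i(1),\ldots,{\mathcal Z}_i(r_i-1)$ are pairwise disjoint (so that their point-counts add up to $I_i(F)$) and that the edge case $j=r_i-1$ from Proposition \ref{identites}(iii)(b) is consistent with (iii)(a) via Lemma \ref{LesZi} — both of which follow directly from the fact that the $i$-th coordinate $\delta^j$ takes distinct values in ${\mathbb F}_q^{\ast}$ for $j$ in a set of representatives modulo $r_i$.
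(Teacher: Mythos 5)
Your overall strategy is exactly the paper's: average the identity $N(\pi_i^{\ast}(F\circ\sigma_i^j))=r_iT_i(F\circ\sigma_i^j)+R_i(F\circ\sigma_i^j)$ over $j$, convert the twisted counts into $Z_i(j)(F)$ via Proposition \ref{identites}(iii), and compare with $N(F)=R_i(F)+T_i(F)+I_i(F)$. However, there is one genuine error: your parenthetical claim that the sets ${\mathcal Z}_i(1),\ldots,{\mathcal Z}_i(r_i-1)$ are ``pairwise disjoint as they correspond to distinct values of the $i$-th coordinate.'' The $i$-th coordinate of a point of a weighted projective space is not a well-defined element of ${\mathbb F}_q$: it is only determined up to multiplication by $\lambda^{a_i}$ for those $\lambda\in\overline{\mathbb F}_q^{\ast}$ which carry one ${\mathbb F}_q$-representative to another, and such $\lambda$ need not lie in ${\mathbb F}_q$ when the weights share common factors. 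For instance in ${\mathbb P}(2,2)$ over ${\mathbb F}_q$ with $q$ odd one has $[1:1]=[\delta:\delta]$ (take $\lambda$ a square root of $\delta$ in $\overline{\mathbb F}_q$), so this point lies in both ${\mathcal T}_0$ and ${\mathcal Z}_0(1)$. This is precisely why the paper's Remark \ref{remark-egaux} needs the extra hypothesis $(a_i,a_j)=1$ for $i\neq j$, together with a B\'ezout argument, to get disjointness; it is not automatic.

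The consequence is that your concluding claim of equality, $N(F)=\frac{1}{r_i}\sum_{j}N(\pi_i^{\ast}(F\circ\sigma_i^j))$, is not valid in general. Fortunately the error goes in the harmless direction for the statement actually being proved: Lemma \ref{LesZi}(iii) gives ${\mathcal I}_i\subseteq{\mathcal Z}_i(1)\cup\cdots\cup{\mathcal Z}_i(r_i-1)$, hence only $I_i(F)\leq\sum_{j=1}^{r_i-1}Z_i(j)(F)$ by the union bound, and your computation then yields
$$\frac{1}{r_i}\sum_{j=0}^{r_i-1}N(\pi_i^{\ast}(F\circ\sigma_i^j))=T_i(F)+\sum_{j=1}^{r_i-1}Z_i(j)(F)+R_i(F)\geq T_i(F)+I_i(F)+R_i(F)=N(F),$$
which is exactly the asserted inequality. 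So the proof of the proposition as stated is correct once you replace that one equality by ``$\geq$'' and drop the disjointness claim. (A minor side remark: item (iii)(b) of Proposition \ref{identites} as printed, with $j=r_i-1$, would contradict (iii)(a); it is evidently a typo for $j=r_i$, and your attempt to reconcile the two via Lemma \ref{LesZi} does not actually work, but you never use that item.)
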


\begin{proof}
If $r_i=1$, then the set $I_i$ is empty and by
$(i)$ and $(ii)$ of Proposition \ref{identites}, we have
$N(F)=R_i(F)+T_i(F)=N(\pi_i^{\ast}F)$ which gives the result.

Suppose now that $r_i\not=1$. By $(i)$  of Proposition \ref{identites}, we have:
$$r_iN(F)=(r_iT_i(F)+R_i(F)) + (r_iI_i(F)+(r_i-1)R_i(F)).$$

On one hand, we have by $(ii)$ of Proposition \ref{identites} that
$r_iT_i(F)+R_i(F)=N(\pi_i^{\ast}F)$ 
and 
on the other hand, by Lemma \ref{LesZi}, we can write
$I_i(F)\leq\sum_{j=1}^{r_i-1}Z_i(j)(F)$. Thus, we have:
\begin{align*}
r_iI_i(F)+(r_i-1)R_i(F) & \leq r_i\left(\sum_{j=1}^{r_i-1}Z_i(j)(F)\right) +(r_i-1)R_i(F)\\
&=\sum_{j=1}^{r_i-1}\left(r_iZ_i(j)(F)+R_i(F)\right).
\end{align*}
Moreover, by  Proposition \ref{identites} $(iii)$, we have:
$$r_iZ_i(j)(F)+R_i(F)=r_iT_i(F\circ\sigma_i^j)+R_i(F\circ\sigma_i^j)$$
and we obtain with Proposition \ref{identites} $(ii)$:
$$r_iZ_i(j)(F)+R_i(F)=N(\pi_i^{\ast}(F\circ\sigma_i^j)).$$
Thus we deduce that:
$$r_iI_i(F)+(r_i-1)R_i(F)=\sum_{j=1}^{r_i-1}N(\pi_i^{\ast}(F\circ\sigma_i^j))$$
and we obtain the desired formula.

\end{proof}

 \begin{remark}\label{remark-egaux}
 Note that under the additional assumption that $(a_i, a_j)=1$ for any $1\leq i\neq j\leq n$, we have equality in the above Proposition~\ref{Mondo}. This comes from the fact that, under this assumption, the sets ${\mathcal Z}_i(j)$ for $1\leq j\leq r_i-1$  form a partition of ${\mathcal I}_i$, hence both inequalities in the above proof are equalities. 
 It remains to show that the sets ${\mathcal Z}_i(j)$ for $1\leq j\leq r_i-1$ are pairwise disjoint.
 Indeed, suppose that there is some common point with ${\mathbb F}_q$-coordinates
 $$[y_0: \cdots : y_{i-1} : \delta^{j_1} : y_{i+1} : \cdots : y_n]=[y'_0 : \cdots : y'_{i-1} : \delta^{j_2} : y'_{i+1} : \cdots : y'_n]$$
 inside ${\mathcal Z}_i(j_1)\cap {\mathcal Z}_i(j_2),$
 with say $1 \leq j_1 \leq j_2\leq r_i-1$. Since this point does not lie in ${\mathcal R}_i$, there is at least one position $k\neq i$, such that $y_k\neq 0\neq y'_k$. Since they are equal, there is some $\lambda \in {\overline{\mathbb F}}_q^{\ast}$ such that
$$(y'_0, \cdots, y'_{i-1}, \delta^{j_2}, y'_{i+1}, \cdots, y'_n)=(\lambda^{a_0}y_0, \cdots, \lambda^{a_{i-1}}y_{i-1}, \lambda^{a_i}\delta^{j_1}, \lambda^{a_{i+1}}y_{i+1}, \cdots, \lambda^{a_n}y_n).$$

  Looking at  the $k$-th and the $i$-th position, we get $y'_k=\lambda^{a_k}y_k$ and $\delta^{j_2}=\lambda^{a_i}\delta^{j_1}$. It follows first that 
  $\lambda^{a_k} = \frac{y'_k}{y_k} \in {\mathbb F}_q^{\ast}$, second that $\lambda^{a_i}=\delta^{j_2-j_1} $. But from a B\'ezout relation $ua_k+va_i=1$, we deduce that 
  $$\lambda = (\lambda^{a_k})^u\times (\lambda^{a_i})^v= (\frac{y'_k}{y_k})^u\times (\delta^{j_2-j_1} )^v \in {\mathbb F}_q^{\ast}.$$

Hence, we have $\lambda =\delta^m$ for some $m\in {\mathbb N}$, so that $\delta^{j_2-j_1}=\lambda^{a_i}=\delta^{ma_i}$. It follows that
$j_2-j_1 \equiv ma_i \hbox{~(mod $q-1$)}$. Since $r_i=(a_i, q-1)$ divides both $a_i$ and $q-1$, it divides $j_2-j_1 \in \{0, \cdots, r_i-1\}$, hence $j_1=j_2$ and we are done.
\end{remark}

%%%%%%%%%%%%%%%%%%%%%%%%%%%%%%%%%%%%%%%%%

%%%%%%%%%%%%%%%%%%%%%%%%%%%%%%%%%%%%%%%%%

%%%%%%%%%%%%%%%%%%%%%%%%%%%%%%%%%%%%%%%%%

%%%%%%%%%%%%%%%%%%%%%%%%%%%%%%%%%%%%%%%%%

\section{An upper bound for the number of rational points}\label{section-upper-bound}

We prove in this section that an hypersurface in a weighted projective space cannot have more rational points than in a standard projective space. The proof is based on an unscrewing and uses Proposition \ref{Mondo}.

\begin{figure}[h]\label{diagram}
\begin{tikzpicture}
[node distance=1.5cm]

 \node (Bas)          {${\mathbb P}(a_0,a_1,a_2,\ldots,a_n)$};
 \node (Milieu)   [above of=Bas]   {${\mathbb P}(1,a_1,a_2\ldots,a_n)$};
 \node (vide2) [above of=Milieu] {$\vdots$};
\node (Haut)   [above of=vide2]   {${\mathbb P}(1,1,1,\ldots,1)={\mathbb P}^n$};

\draw[<-] (Bas) to node[left, midway,scale=0.9]  {$\pi_0$} (Milieu);
\draw[<-] (Milieu) to node[left, midway,scale=0.9]  {$\pi_1$} (vide2);
\draw[<-] (vide2) to node[left, midway,scale=0.9]  {$\pi_n$} (Haut);

 \end{tikzpicture}
\caption{Screwing of weighted projective spaces}
 \end{figure}

\begin{theorem}\label{ilenmer}
Let $F$ be a homogeneous polynomial in ${\mathbb F}_q[X_0,\ldots,X_n]$ of  $(a_0,a_1,\ldots,a_n)$-weighted degree $d\leq q+1$. Then the number $N(F)$ of rational points over ${\mathbb F}_q$ of the hypersurface of the weighted projective space ${\mathbb P}(a_0,a_1,\ldots,a_n)$ given by  the set of zeros of $F$ satisfies:
$$N(F)\leq dq^{n-1}+p_{n-2}.$$
\end{theorem}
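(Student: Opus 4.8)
The strategy is to reduce the weighted case to Serre's classical bound by iterating the inequality of Proposition~\ref{Mondo} up the tower of weighted projective spaces depicted in Figure~\ref{diagram}. Starting from $F$ on ${\mathbb P}(a_0,a_1,\ldots,a_n)$, one application of Proposition~\ref{Mondo} with $i=0$ bounds $N(F)$ by an average of the quantities $N(\pi_0^{\ast}(F\circ\sigma_0^{j}))$, where each $\pi_0^{\ast}(F\circ\sigma_0^{j})$ is a homogeneous polynomial of the same weighted degree $d$ on ${\mathbb P}(1,a_1,\ldots,a_n)$. Iterating with $i=1$, then $i=2$, and so on up to $i=n$, one arrives after $n+1$ steps at an average of quantities $N(G)$ where each $G$ is a homogeneous polynomial of degree $d$ on ${\mathbb P}(1,1,\ldots,1)={\mathbb P}^n$, i.e.\ an ordinary homogeneous polynomial of degree $d$. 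Because $d\leq q+1$, each such $N(G)$ is bounded by Serre's bound $dq^{n-1}+p_{n-2}$ (the case $d=q+1$ being covered by the trivial part of the discussion in the introduction, since $N(G)\le p_n\le dq^{n-1}+p_{n-2}$ when $d\ge q+1$). Since $N(F)$ is bounded by a convex combination (an average) of numbers each at most $dq^{n-1}+p_{n-2}$, the same bound holds for $N(F)$.

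\textbf{Carrying it out.} First I would set up the tower precisely: define ${\mathbb P}^{(k)}:={\mathbb P}(1,\ldots,1,a_k,\ldots,a_n)$ (first $k$ weights equal to one) so that ${\mathbb P}^{(0)}$ is the original space and ${\mathbb P}^{(n+1)}={\mathbb P}^n$. I would then prove by downward-or-upward induction on $k$ the statement: for any homogeneous $F$ of weighted degree $d\le q+1$ on ${\mathbb P}^{(k)}$, one has $N(F)\le \frac{1}{M}\sum_{\ell} N(G_\ell)$ for finitely many ordinary degree-$d$ polynomials $G_\ell$ on ${\mathbb P}^n$, where $M$ is the number of terms. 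The inductive step is exactly Proposition~\ref{Mondo} applied with the index $i=k$: it replaces $N(F)$ on ${\mathbb P}^{(k)}$ by the average $\frac{1}{r_k}\sum_{j=0}^{r_k-1}N(\pi_k^{\ast}(F\circ\sigma_k^{j}))$ of point-counts of polynomials on ${\mathbb P}^{(k+1)}$, each still of weighted degree $d$ since $\pi_k^{\ast}$ preserves weighted degree and $\sigma_k$ is an automorphism. One must check the bookkeeping that the composite of all these averages is again an average (convex combination with nonnegative coefficients summing to one), which is immediate since a convex combination of convex combinations is a convex combination. At the top of the tower, invoke Serre's theorem (quoted in the introduction) for $d\le q$ and the trivial observation $N(G)\le p_n\le dq^{n-1}+p_{n-2}$ for $d=q+1$, to bound every $N(G_\ell)$ by $dq^{n-1}+p_{n-2}$, and conclude.

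\textbf{Main obstacle.} The genuine engine is Proposition~\ref{Mondo}, which we are entitled to assume, so the remaining work is essentially organizational: verifying that the weighted degree is genuinely preserved at each floor (so that the hypothesis $d\le q+1$ continues to apply all the way up and Serre's bound is ultimately available), and tracking the automorphisms $\sigma_k$ correctly, noting that composing with $\sigma_k^{j}$ does not change the weighted degree nor the hypothesis $d\le q+1$. A minor subtlety worth a sentence is the boundary case $d=q+1$: Serre's bound as stated in the literature is usually for $d\le q$, but the introduction already records that for $d\ge q+1$ the inequality $dq^{n-1}+p_{n-2}\ge p_n$ holds, so $N(G)\le \sharp{\mathbb P}^n({\mathbb F}_q)=p_n\le dq^{n-1}+p_{n-2}$ suffices. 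No deeper difficulty is expected beyond this careful propagation of hypotheses through the induction.
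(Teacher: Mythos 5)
Your proposal is correct and follows essentially the same route as the paper: iterate Proposition~\ref{Mondo} up the tower of morphisms $\pi_i$ to reduce to an average of point-counts of ordinary degree-$d$ hypersurfaces in ${\mathbb P}^n$, then apply the Serre bound. Your explicit treatment of the boundary case $d=q+1$ is a welcome precision that the paper leaves implicit.
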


\begin{proof}
Let $F$ be a homogeneous polynomial in ${\mathbb F}_q[X_0,\ldots,X_n]$ of  $(a_0,a_1,\ldots,a_n)$-weighted degree $d$. We consider the successive pullbacks
$\pi_0^{\ast}(F\circ\sigma_0^{j_0})$ 
with $j_0\in\{0,\ldots, r_0-1\}$, 
and $\pi_1^{\ast}(\pi_0^{\ast}(F\circ\sigma_0^{j_0})\circ\sigma_1^{j_1})$
with $j_1\in\{0,\ldots, r_1-1\}$, and so on, of $F$.

 By Proposition \ref{Mondo}, considering the morphism 
 $$
\begin{matrix}
\pi_{0} &:& {\mathbb P}(1,a_1,\ldots,a_n)&\longrightarrow & {\mathbb P}(a_0, a_1, \ldots,a_n)\cr
&&[x_0:x_1:\cdots:x_n]&\longmapsto & [x_0^{a_0}:x_1:\cdots:x_n]\cr
\end{matrix}
$$
 we have:
 $$
N(F)\leq \frac{1}{r_0}\sum_{j_0=0}^{r_0-1}N(F_0(j_0))$$
where
$F_0(j_0)=\pi_0^{\ast}(F\circ\sigma_0^{j_0})$.
Then, considering the morphism 
 $$
\begin{matrix}
\pi_{1} &:& {\mathbb P}(1, 1, a_2\ldots,a_n)&\longrightarrow & {\mathbb P}(1,a_1,\ldots,a_n)\cr
&&[x_0:x_1:x_2:\cdots:x_n]&\longmapsto & [x_0:x_1^{a_1}:x_2:\cdots:x_n]\cr
\end{matrix}
$$
 we have for $0\leq j_0\leq r_0-1$:
$$
N(F_0(j_0))\leq\frac{1}{r_1}\sum_{j_1=0}^{r_1-1}N(F_1(j_1))$$
where
$F_1(j_1)=\pi_1^{\ast}(F_0(j_0)\circ\sigma_1^{j_1})$.
 
 Thus:
 $$
N(F)\leq\frac{1}{r_0r_1}\sum_{j_0=0}^{r_0-1}
\sum_{j_1=0}^{r_1-1}N(F_1(j_1)).$$

Continuing this process, we obtain
 $$
N(F)\leq\frac{1}{r_0\ldots r_n}\sum_{j_0=0}^{r_0-1} \ldots
\sum_{j_n=0}^{r_n-1}N(F_n(j_n)).$$

 The last polynomials are  homogeneous polynomials of degree $d$ in the standard $n$-dimensional projective space ${\mathbb P}^n={\mathbb P}(1,\ldots,1)$.
 Then we apply the Serre bound 
 $$N(F)\leq \frac{1}{r_0\ldots r_n}r_0\ldots r_n (dq^{n-1}+p_{n-2})=dq^{n-1}+p_{n-2}$$
 and we get the result.
\end{proof}

%%%%%%%%%%%%%%%%%%%%

%%%%%%%%%%%%%%%%%%%%

%%%%%%%%%%%%%%%%%%%%

\section{The main result}\label{section-main-result}

We are now enable to state and prove Conjecture~\ref{conjecture} provided $a_1=1$ (it was already assumed in the conjecture that $a_0=1$).

\begin{theorem}
For any degree $d$ and for any nonnegative integers $a_2,\ldots,a_n$, we have:
$$e_q(d ; 1,1,a_2,\ldots,a_n)=\min\{p_n,dq^{n-1}+p_{n-2}\}.$$
In other words, Conjecture~\ref{conjecture}  is true for any $(a_1,a_2,\ldots,a_n)$ with $a_1=1$ and without any assumption on the degree $d$.
\end{theorem}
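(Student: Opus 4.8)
The plan is to combine the lower bound from Section~\ref{section-lower-bound} with the upper bound of Theorem~\ref{ilenmer}, splitting according to the size of $d$ relative to $q+1$. First I would deal with the \emph{lower bound}. Since $a_0=a_1=1$, we have $a:=\min\{\lcm(a_r,a_s),\ 0\le r<s\le n\}=\lcm(a_0,a_1)=1$, which trivially divides $d$; the construction recalled at the end of Section~\ref{section-lower-bound} (taking $r=0$, $s=1$, and the polynomial $\prod_{i=1}^{d}(\alpha_iX_0-\beta_iX_1)$ when $d\le q+1$, or the degree-$d$ polynomial $X_0^{d-q-1}(X_0^qX_1-X_0X_1^q)$ achieving $p_n$ points when $d\ge q+1$, exactly as in the classical Serre situation in the introduction) yields
$$e_q(d;1,1,a_2,\ldots,a_n)\ \ge\ \min\{p_n,\ dq^{n-1}+p_{n-2}\}.$$
Note this half needs no restriction on $d$.

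Next the \emph{upper bound}. Here I would distinguish two cases. If $d\le q+1$, Theorem~\ref{ilenmer} applies directly and gives $N(F)\le dq^{n-1}+p_{n-2}$ for every nonzero weighted-homogeneous $F$ of weighted degree $d$; since also $N(F)\le\sharp{\mathbb P}(a_0,\ldots,a_n)({\mathbb F}_q)=p_n$ (using $\sharp{\mathbb P}(a_0,\ldots,a_n)({\mathbb F}_q)=p_n$ from Section~\ref{section-wps}), we conclude $N(F)\le\min\{p_n,\ dq^{n-1}+p_{n-2}\}$. If instead $d\ge q+2$, then the trivial bound $N(F)\le p_n$ already matches the target, because $dq^{n-1}+p_{n-2}\ge (q+1)q^{n-1}+p_{n-2}=q^n+q^{n-1}+\cdots+1=p_n$, so $\min\{p_n,\ dq^{n-1}+p_{n-2}\}=p_n$ and there is nothing more to prove. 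Combining the two cases gives $e_q(d;1,1,a_2,\ldots,a_n)\le\min\{p_n,\ dq^{n-1}+p_{n-2}\}$ for all $d$.

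Finally, putting the matching lower and upper bounds together yields the claimed equality
$$e_q(d;1,1,a_2,\ldots,a_n)=\min\{p_n,\ dq^{n-1}+p_{n-2}\},$$
which is exactly Conjecture~\ref{conjecture} in the case $a_1=1$ (recalling that the conjecture already hypothesizes $a_0=1$, and that $\lcm(a_1,\ldots,a_n)\mid d$ is not needed here since $a_1=1$ forces the relevant minimal lcm to be $1$). I do not expect a genuine obstacle in this assembly: the real work is already done in Theorem~\ref{ilenmer} (via the screwing diagram of Figure~1 and Proposition~\ref{Mondo}), and the only point requiring a little care is the bookkeeping that the hypothesis $a_1=1$ is precisely what makes the lower-bound construction of Section~\ref{section-lower-bound} produce the coefficient $d/a=d$ rather than $d/a_1$ with $a_1>1$, so that lower and upper bounds coincide; one should also make explicit the elementary inequality $dq^{n-1}+p_{n-2}\ge p_n \iff d\ge q+1$ used to handle large $d$.
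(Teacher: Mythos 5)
Your proposal is correct and follows essentially the same route as the paper: the trivial bound $N(F)\le p_n$ together with Theorem~\ref{ilenmer} for $d\le q+1$ gives the upper bound, and the two explicit polynomials $\prod_{i=1}^{d}(\alpha_iX_0-\beta_iX_1)$ (for $d\le q+1$) and $X_0^{d-q-1}(X_0^qX_1-X_0X_1^q)$ (for $d\ge q+1$) give the matching lower bound, exactly as in the paper's argument. Your explicit remark that $dq^{n-1}+p_{n-2}\ge p_n$ precisely when $d\ge q+1$ is a welcome clarification of the case split that the paper leaves implicit.
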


\begin{proof}
As seen in Subsection \ref{section-wps}, a hypersurface of ${\mathbb P}(1,1,a_2\ldots,a_n)$ 
has obviously 
a number of rational points less than or equal to $p_n$
and the hypersurface defined by 
the homogeneous polynomial $X_0^{d-q-1}(X_0^qX_1-X_0X_1^q)$  
of degree $d\geq q+1$
has $p_n$ rational points  (the degree is equal to $d$ since we have supposed that the weights of $X_0$ and $X_1$ are equal to 1 in the graded ring ${\mathbb F}_q[X_0,\ldots,X_n]$).
Now if $d\leq q+1$, by Theorem \ref{ilenmer}  we have $e_q(d ; 1,1,a_2,\ldots,a_n)\leq \min\{p_n,dq^{n-1}+p_{n-2}\}$
and  the bound  is met using the following degree $d$ homogeneous polynomial:
$$F=\prod_{i=1}^d(\alpha_i X_0 - \beta_i X_1)$$
where $(\alpha_1:\beta_1), \ldots, (\alpha_d:\beta_d)$ are distinct elements of ${\mathbb P}^1({\mathbb F}_q)$.
\end{proof}

\bigskip

{\bf Acknowledgments:}
The authors are very grateful to Fabien Herbaut for fruitful discussions.
They would  like also to thank Jade Nardi, Sudhir Ghorpade and Mrinmoy Datta
for some comments on this question. 
Finally, they would like to express their gratitude to Laurent Moret-Bailly for providing them a proof
of 
Proposition \ref{LMB-projPond}
and the more general statement given in Proposition \ref{Prop-LMB_Scheme}.

%%%%%%%%%%%%%%%%%%%%

%%%%%%%%%%%%%%%%%%%%

%%%%%%%%%%%%%%%%%%%%

\bigskip
\bibliographystyle{plain}
%\bibliography{biblio_ter}

\end{document}